\newtheorem{theorem}{Theorem}[section]
\newtheorem{lemma}[theorem]{Lemma}
\newtheorem{example}[theorem]{Example}
\newcommand{\scr}{\mathscr}   
      \def\@setcopyright{}
      \def\serieslogo@{}
\begin{document}

   \author{Amin Bahmanian}
   \address{Department of Mathematics and Statistics, 221 Parker Hall\\
 Auburn University, Auburn, AL USA   36849-5310}
  
   \title[Detachments of Amalgamated 3-uniform Hypergraphs]{Detachments of Amalgamated 3-uniform Hypergraphs  : Factorization Consequences}

   \begin{abstract}
A \textit{detachment} of a hypergraph $\scr F$ is a hypergraph obtained from $\scr F$ by splitting some or all of its vertices into more than one vertex. \textit{Amalgamating} a hypergraph $\scr G$ can be thought of as taking $\scr G$, partitioning its vertices, then for each element of the partition squashing the vertices to form a single vertex in the amalgamated hypergraph $\scr F$. 
In this paper we use Nash-Williams lemma on laminar families to prove a detachment theorem for amalgamated 3-uniform hypergraphs, which yields a substantial generalization of previous amalgamation theorems by Hilton, Rodger and Nash-Williams.  

To demonstrate the power of our detachment theorem, we show that the complete 3-uniform $n$-partite multi-hypergraph $\lambda K_{m_1,\ldots,m_n}^{3}$ can be expressed as the union $\scr G_1\cup \ldots \cup\scr G_k$ of $k$ edge-disjoint factors, where for $i=1,\ldots, k$, $\scr G_i$ is $r_i$-regular, if and only if:
\begin{itemize}
\item [(i)] $m_i=m_j:=m$ for all $1\leq i,j\leq k$, 
\item [(ii)] $3\divides r_imn$ for each $i$, $1\leq i\leq k$, and  
\item [(iii)] $\sum_{i=1}^{k} r_i=\lambda \binom{n-1}{2}m^2$. 
\end{itemize}
   \end{abstract}
   \keywords{Amalgamations, Detachments, 3-uniform Hypergraphs, Laminar Families, Factorization, Decomposition}

   \date{\today}

   \maketitle

\section {Introduction}
A \textit{detachment} of a hypergraph $\scr F$ is, informally speaking, a hypergraph obtained from $\scr F$ by splitting some or all of its vertices into more than one vertex. If $\scr G$ is a detachment of $\scr F$, then $\scr F$ is an \textit{amalgamation} of $\scr G$. Amalgamating $\scr G$, intuitively speaking, can be thought of as taking $\scr G$, partitioning its vertices, then for each element of the partition squashing the vertices to form a single vertex in the amalgamated hypergraph $\scr F$. We shall give more precise definition for amalgamation and detachment in Section \ref{notation}. 

Hilton \cite{H2} used amalgamation to decompose complete graphs into Hamiltonian cycles, obtaining a new proof of Walecki's result \cite{L}.
Hilton and Rodger \cite{HR} produced new proofs of Laskar and Auerbach's results on Hamiltonian decomposition of the complete multipartite graphs. Buchanan \cite{B} used amalgamations to prove that for any $2$-factor $U$ of $K_n$, $n$ odd, $K_n-E(U)$ admits a Hamiltonian decomposition. Rodger and Leach \cite{LR1} solved the corresponding existence problem for complete bipartite graphs, and obtained a solution for complete multipartite graphs when $U$ has no small cycles \cite{LR2, LR3}.

Perhaps the most interesting use of amalgamations has been to prove embedding results; see, for example \cite{AndHil1, AndHil2, HJRW, MatJohns, Nash87, RW}. Detachments of graphs have also been studied in \cite{BeJacksonJor,JacksonJor}, generalizing some results of Nash-Williams \cite{NashW85, NashW}. 
For a survey about the method of amalgamation and embedding partial edge-colorings we refer the reader to \cite{AndRod1}.  

Most of the results in graph amalgamation have used edge-coloring techniques due to de Werra \cite{deW71, deW71-2, deW75, deW75-2}, however Nash-Williams \cite{Nash87} proved a lemma (see Lemma \ref{laminarlem} in Section \ref{hyp1proof} below) 
to generalize theorems of Hilton and Rodger.  In this paper we apply Nash-Williams technique to produce a general detachment theorem for 3-uniform hypergraphs (see Theorem \ref{hyp1}). This result is not only a substantial generalization of previous amalgamation theorems, but also yields several consequences on factorizations of complete 3-uniform multipartite (multi)hypergraphs.  To demonstrate the power of our detachment theorem, we show that the complete 3-uniform $n$-partite multi-hypergraph $\lambda K_{m_1,\ldots,m_n}^{3}$ can be expressed as the union $\scr G_1\cup \ldots \cup\scr G_k$ of $k$ edge-disjoint factors, where for $i=1,\ldots, k$, $\scr G_i$ is $r_i$-regular, if and only if:
\begin{itemize}
\item [(i)] $m_i=m_j:=m$ for all $1\leq i,j\leq k$, 
\item [(ii)] $3\divides r_imn$ for each $i$, $1\leq i\leq k$, and  
\item [(iii)] $\sum_{i=1}^{k} r_i=\lambda \binom{n-1}{2}m^2$. 
\end{itemize}

It is expected that Theorem \ref{hyp1} can be used to provide conditions under which one can embed a $k$-edge-colored complete 3-uniform hypergraph  $K_{n}^{3}$ into an edge-colored  $K_{n+m}^{3}$ such that $i^{th}$ color class of  $K_{n+m}^{3}$ induces an $r_i$-factor for $i=1,\ldots,k$. However obtaining such results will require more advanced edge-coloring techniques and it will be much more complicated than for companion results for simple graphs, with a complete solution unlikely to be found in the near future (see \cite{BahRod4Emb2}).

In connection with Kirkman's famous Fifteen Schoolgirls Problem \cite{Kirk1847}, Sylvester remarked in 1850 that the complete 3-uniform hypergraph with 15 vertices, is 1-factorizable. Several generalizations of this problem were solved during the last 70 years (see for example \cite{Pelt, RayWil73, Baran75, Baran79}).  It was Baranyai, who died tragically in his youth, who settled this 120-year-old problem (1-factorization of complete uniform hypergraphs) ingeniously \cite{Baran75, Baran79}.  

Baranyai's proof actually yields a method for constructing a 1-factorization recursively. However, this approach would not be very efficient and its complexity is exponential \cite{JungGNA}. 
Baranyi's original theorem was spurred by  Peltesohn's result \cite{Pelt} which was a direct construction, and it was polynomial time to implement. 
Brouwer and Schrijver gave an elegant proof for 1-factorizations of  the complete uniform hypergraph for which the algorithm is more efficient \cite{BrouSchrij}. Our construction leads to an algorithm similar to that of Brouwer and Schrijver. This is discussed briefly in Section \ref{algocons}, but for more details we refer the reader to \cite{BahThesis}.

Notation and more precise definitions will be given in Section \ref{notation}. Any undefined term may be found in \cite{HypBerge}. In Section \ref{mhyp1}, we state our main result and we postpone its proof to Section \ref{hyp1proof}. In Section \ref{factorizationcor}, we exhibit some applications of our result by providing several factorization theorems for 3-uniform (multi)hypergraphs. 
The key idea used in proving the main theorem is short and is given in \ref{constG}. The rest of Section \ref{hyp1proof} is devoted to the verification of all conditions in Theorem \ref{hyp1}. 

\section{ Notation and More Precise Definitions}\label{notation}
In this paper $\mathbb{R}$ denotes the set of real numbers and $\mathbb{N}$ denotes the set of positive integers.
If $f$ is a function from a set $X$ into a set $Y$ and $y\in Y$, then $f^{-1}(y)$ denotes the set $\{x\in X:f(x)=y\}$, and $f^{-1}[y]$ denotes $f^{-1}(y)\backslash\{y\}$. If $x, y$ are real numbers, then $\lfloor x \rfloor$ and $\lceil x \rceil$ denote the integers such that $x-1<\lfloor x \rfloor \leq x \leq \lceil x \rceil < x+1$, and $x\approx y$ means $\lfloor y \rfloor \leq x\leq \lceil y \rceil$. 

For the purpose of this paper, a \textit {hypergraph} $\scr{G}$ is an ordered quintuple $(V(\scr{G}),  E(\scr{G}), H(\scr{G}),$ $\psi, \phi)$ where $V(\scr{G}),  E(\scr{G}),  H(\scr{G})$ are disjoint finite sets, $\psi:H(\scr{G}) \rightarrow V(\scr G)$ is a function and  $\phi: H(\scr G) \rightarrow E(\scr G)$ is a surjection.  
  Elements of $V(\scr{G}), E(\scr{G}), H(\scr{G})$ are called \textit{vertices}, \textit{hyperedges} and \textit{hinges} of $\scr G$, respectively. 
A vertex $v$ and hinge $h$ are said to be \textit{incident} with each other if $\psi(h)=v$. A hyperedge $e$ and  hinge $h$ are said to be \textit{incident} with each other if $\phi(h)=e$. A hinge $h$ is said to \textit{attach} the hyperedge $\phi(h)$ to the vertex $\psi(h)$. In this manner, the vertex $\phi(h)$ and the hyperedge $\psi(h)$ are said to be \textit{incident} with each other. If $e\in E(\scr G)$, and $e$ is incident with $n$ hinges $h_1, \ldots, h_n$ for some $n\in \mathbb N$, then the hyperedge $e$ is said to \textit{join} (not necessarily distinct) vertices $\psi(h_1), \ldots, \psi(h_n)$.  If $v\in V(\scr G)$, then the number of hinges incident with $v$ is  called the \textit{degree} of $v$ and is denoted by $d_{\scr G}(v)$.  

The number of vertices incident with a hyperedge $e$, denoted by $|e|$, is called the \textit{size} of $e$. If $|e|=1$ then $e$ is called a \textit{loop}. 
If for all hyperedges $e$ of $\scr G$, $|e|\leq 2$ and $|\phi^{-1}(e)|=2$, then $\scr G$ is a \textit{graph}.  
If $n>1$ and $e_1,\ldots, e_n$ are $n$ distinct hyperedges of $\scr G$, incident with the same set of vertices, then $e_1,\ldots, e_n$ is said to be \textit{multiple hyperedges}.  A \textit{multi-hypergraph} is a hypergraph with  multiple hyperedges.  

Thus a hypergraph, in the sense of our definition is a generalization of a finite hypergraph as usually defined, but for convenience, we imagine each hyperedge of a hypergraph to be attached to the vertices which it joins by in-between objects called hinges. In fact if for every edge $e$, $|e|=|\phi^{-1}(e)|$, then our definition is essentially the same as the usual definition. One can think of a hypergraph as a bipartite multigraph, where $E$ forms one class, $V$ forms other class, and the hinges $H$ form the edges. 
A hypergraph may be drawn as a set of points representing the vertices. An edge is represented by a simple closed curve enclosing its incident vertices. A hinge is represented by a small line attached to the vertex incident with it (see Figure \ref{figure:hypexample1}). 
\begin{example}\label{hyp1ex}
\textup{
 Let $\scr F=(V,  E, H, \psi, \phi)$, with $V=\{v_i:1\leq i\leq 7\}, E=\{e_1,e_2, e_3\}, H=\{h_i: 1\leq i\leq 9\}$, such that $\psi(h_1)=v_1, \psi(h_2)=\psi(h_3)=v_2, \psi(h_4)=v_3, \psi(h_5)=\psi(h_6)=\psi(h_7)=v_4, \psi(h_8)=v_5, \psi(h_9)=v_6$ and $\phi(h_1)=e_1, \phi(h_2)=\phi(h_3)=\phi(h_4)=\phi(h_5)=\phi(h_6)=e_2, \phi(h_7)=\phi(h_8)=\phi(h_9)=e_3$.  Moreover $|e_1|=1,|e_2|=|e_3|=3$, and $d(v_1)=d(v_3)=d(v_5)=d(v_6)=1, d(v_2)=4, d(v_4)=3, d(v_7)=0$. 
\begin{figure}[htbp]
\begin{center}
\scalebox{.75}
{ \includegraphics {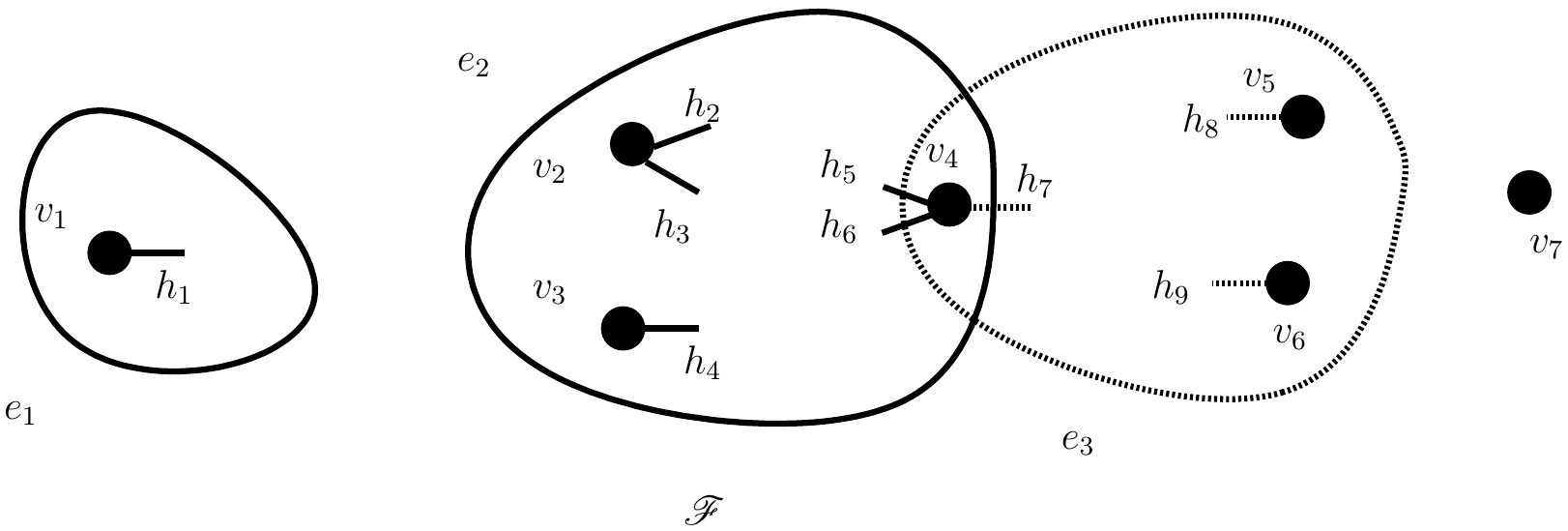} }
\caption{Representation of a hypergraph $\scr F$ }
\label{figure:hypexample1}
\end{center}
\end{figure} 
}\end{example}

Throughout this paper, the letters $\scr F$ and $\scr G$ denote hypergraphs (possibly with loops and multiple hyperedges). The set of hinges of $\scr G$ which are incident with a vertex $v$ (a hyperedge $e$), is denoted by $H(\scr G, v)$ ($H(\scr G, e)$, respectively).  Thus if $e\in E(\scr G)$, then $H(\scr G, e)=\phi^{-1}(e)$.
If $v\in V(\scr G)$, then $H(\scr G, v)=\psi^{-1}(v)$, and $|H(\scr G, v)|$ is the degree $d(v)$ of $v$. If $S$ is a subset of $V(\scr G)$ or $E(\scr G)$, then $H(\scr G, S)$ denotes the set of those hinges of $\scr G$ which are incident with an element of $S$. If $S_1 \subset V(\scr G)$ and  $S_2 \subset E(\scr G)$, then $H(\scr G, S_1, S_2)$ denotes $H(\scr G, S_1) \cap H(\scr G, S_2)$.  If $v \in V(\scr G)$ and  $S \subset E(\scr G)$, then $H(\scr G, v, S)$ denotes $H(\scr G, \{v\}, S)$.
To avoid ambiguity, subscripts may be used to indicate the hypergraph in which hypergraph-theoretic notation should be interpreted --- for example, $d_{\scr G}(v)$.

Let $\scr G$ be a hypergraph in which each hyperedge is incident with exactly three hinges.   If $u,v,w$ are three (not necessarily distinct) vertices of $\scr G$, then $\nabla (u,v, w)$ denotes the set of hyperedges which are incident with $u,v,w$. For each hyperedge $e$ incident with three hinges $h_1,h_2,h_3$ there are three possibilities (see Figure \ref{figure:edge3types}): 
\begin{itemize} 
\item [(i)] $e$ is incident with exactly one vertex $u$. In this case $u$ is incident with $h_1,h_2,h_3$. We denote $\nabla (u,u,u)$ by $\nabla (u^3)$. 
\item [(ii)] $e$ is incident with exactly two distinct vertices $u, v$. In this case one of the vertices, say $u$ is incident with two hinges, say $h_1,h_2$ and $v$ is incident with $h_3$. We denote $\nabla (u,u,v)$ by $\nabla(u^2, v)$.  
\item [(iii)] $e$ is incident with three distinct vertices $u,v$ and $w$. 
\end{itemize} 
For \textit{multiplicity} we use $m(.)$ rather than $| \nabla(.)|$.
\begin{figure}[htbp]
\begin{center}
\scalebox{.75}
{ \includegraphics {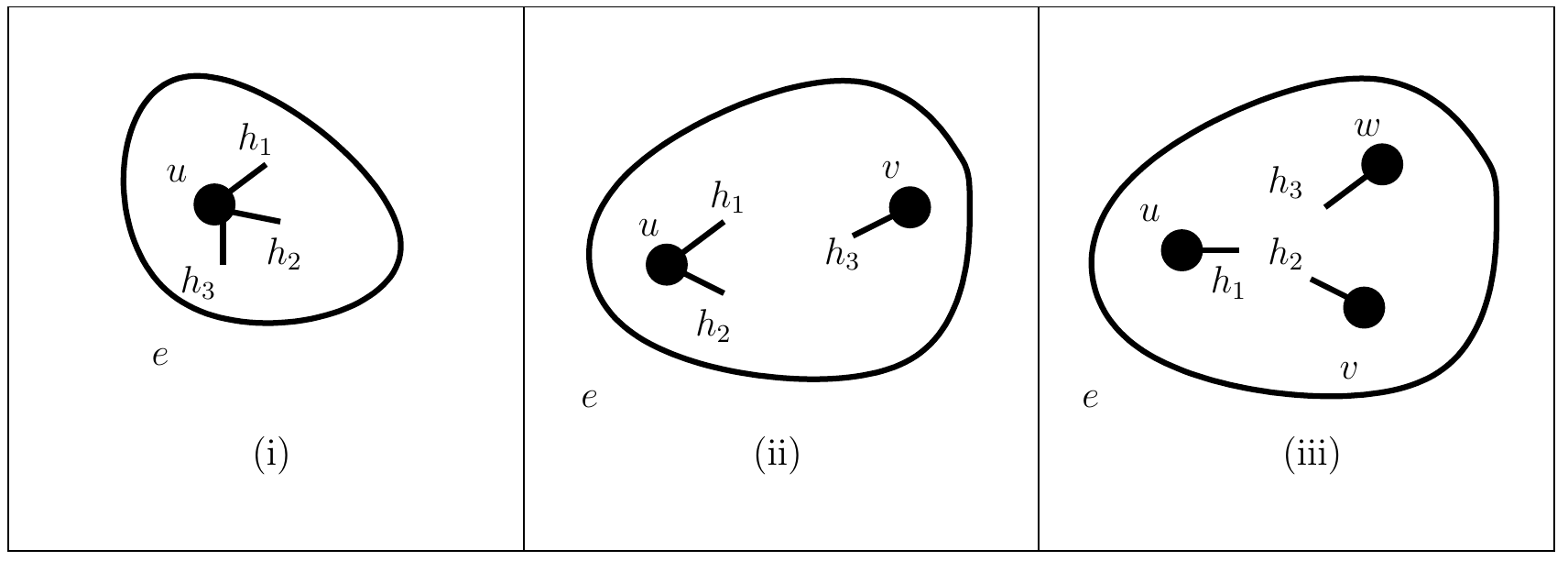} }
\caption{ The three types of edges in a hypergraph $\scr G$ in which $|H(\scr G,e)|=3$ for every edge $e$}
\label{figure:edge3types}
\end{center}
\end{figure} 
A hypergraph $\scr G$ is said to be \textit{$k$-uniform} if $|e|=|H(\scr G,e)|=k$ for each $e\in E(\scr G)$. A $k$-uniform hypergraph with $n$ vertices is said to be \textit{complete}, denoted by $K_n^k$, if every $k$ distinct vertices are incident within one edge. A $3$-uniform hypergraph with vertex partition $\{V_1,\ldots,V_n\}$ with $|V_i|=m_i$ for $i=1,\ldots,n$, is said to be (i) \textit{n-partite}, if every edge is incident with at most one vertex of each part, 
and (ii) \textit {complete n-partite}, denoted by $K_{m_1,\ldots, m_n}^3$, if it is $n$-partite and every three distinct vertices from three different parts are incident.

If we replace every hyperedge of $\scr G$ by $\lambda$ ($\geq 2$) multiple hyperedges, then we denote the new (multi) hypergraph by $\lambda \scr G$. A \textit{k-hyperedge-coloring} of $\scr G$ is a mapping $K: E(\scr G)\rightarrow C$, where $C$ is a set of $k$ \textit{colors} (often we use $C=\{1,\ldots,k\}$), and the hyperedges of one color form a \textit{color class}. The sub-hypergraph of $\scr G$ induced by the color class $j$ is denoted by $\scr G(j)$.

A hypergraph $\scr G$ is said to be (i) \textit{regular} if there is an integer $d$ such that every vertex has degree $d$, and (ii) \textit{k-regular} if every vertex has degree $k$. A \textit{factor} of $\scr G$ is a regular spanning sub-hypergraph of $\scr G$. A \textit{$k$-factor} is a $k$-regular factor. A \textit{factorization} is a decomposition (partition) of $E(\scr G)$ into factor(s). Let $r_1,\ldots, r_k$ be (not necessarily distinct) positive integers. An \textit{$(r_1,\ldots, r_k)$-factorization} is a factorization in which there is one $r_i$-factor for $i=1,\ldots, k$. An $(r)$-factorization is called simply an \textit{$r$-factorization}. 
 A hypergraph $\scr G$ is said to be \textit{factorizable} if it has a factorization. The definition for \textit{$k$-factorizable} and \textit{$(r_1,\ldots, r_k)$-factorizable}  hypergraphs is similar.

If $\scr F=(V,  E, H, \psi, \phi)$ is a hypergraph and $\Psi$ is a function from $V$ onto a set $W$, then we shall say that the hypergraph $\scr G=(W,  E, H, \Psi \circ \psi, \phi)$ is an \textit{amalgamation} of $\scr F$ and that $\scr F$ is a \textit{detachment} of $\scr G$. In this manner, $\Psi$ is called an \textit{amalgamation function}, and $\scr G$ is the \textit{$\Psi$-amalgamation} of $\scr F$. Associated with $\Psi$ is the \textit{number function} $g:W\rightarrow \mathbb N$ defined by $g(w)=|\Psi^{-1}(w)|$, for each $w\in W$, and we shall say that $\scr F$ is a \textit{g-detachment} of $\scr G$. Intuitively speaking, a $g$-detachment of $\scr G$ is obtained by splitting each $u\in V(\scr G)$ into $g(u)$ vertices. 
 Thus $\scr F$ and $\scr G$ have the same hyperedges and hinges, and each vertex $v$ of $\scr G$ is obtained by identifying those vertices of $\scr F$ which belong to the set $\Psi^{-1}(v)$. In this process, a hinge incident with a vertex $u$ and a hyperedge $e$ in $\scr F$ becomes incident with the vertex $\Psi(u)$ and the  edge $e$ in $\scr G$. Since two hypergraphs $\scr F$ and $\scr G$ related in the above manner have the same hyperedges, coloring the hyperedges of one of them is the same thing as coloring the hyperedges of the other. Hence an amalgamation of a hypergraph with colored hyperedges is a hypergraph with colored hyperedges.

\begin{example} 
\textup{
Let $\scr F$ be the hypergraph of Example \ref{hyp1ex}. Let $\Psi:V\rightarrow \{w_1,w_2,w_3,w_4\}$ be the function with $\Psi(v_1)=\Psi(v_7)=w_1$, $\Psi(v_2)=w_2$, $\Psi(v_3)=\Psi(v_4)=w_3$, $\Psi(v_5)=\Psi(v_6)=w_4$. The hypergraph $\scr G$ in Figure \ref{figure:amalgamhypex1} is the $\Psi$-amalgamation of $\scr F$.  
\begin{figure}[htbp]
\begin{center}
\scalebox{.75}
{ \includegraphics {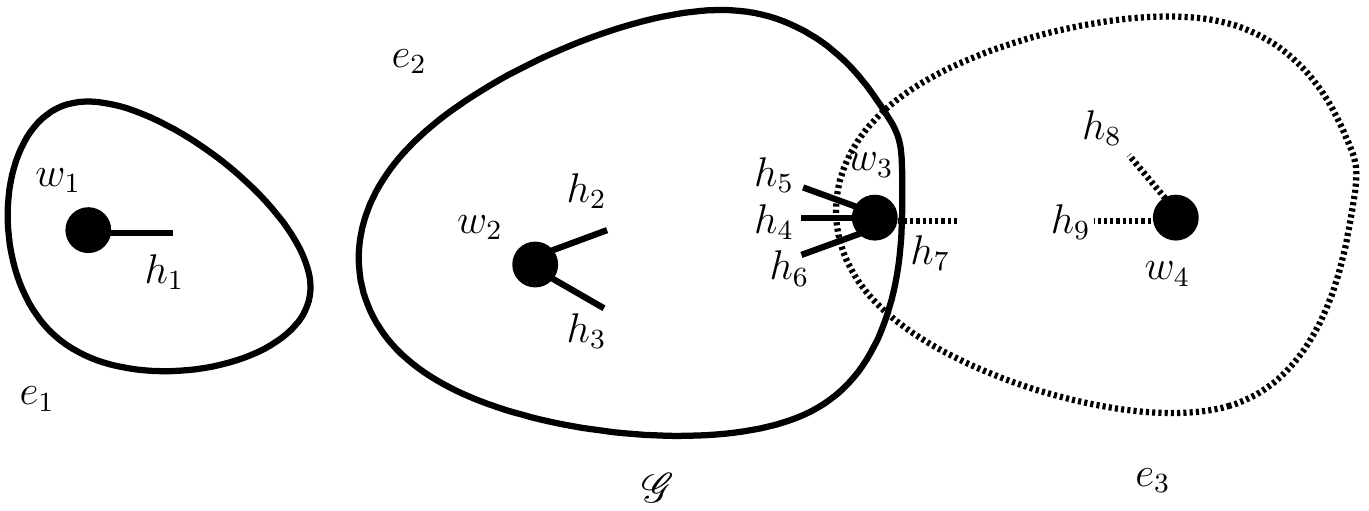} }
\caption{Amalgamation $\scr G$  of  the hypergraph $\scr F$ in Example \ref{hyp1ex}}
\label{figure:amalgamhypex1}
\end{center}
\end{figure} 
}\end{example}

\section{Statement of the Main Theorem}\label{mhyp1}
In the remainder of this paper, all hypergraphs are either 3-uniform or are amalgamations of 3-uniform hypergraphs. That is, for every hypergraph $\scr F$ we have 
\begin{equation} \label{edgeassump}
1\leq |e|\leq |H(\scr F,e)|=3  \mbox{ for every  } e \mbox{ in } \scr F. 
\end{equation}
Therefore every edge is of one the types shown in Figure \ref{figure:edge3types}.  For $g:V(\scr F)\rightarrow \mathbb{N}$, we define the symmetric function $\tilde g:V^3(\scr F)\rightarrow \mathbb{N}$ such that for distinct $x,y,z\in V(\scr F)$,  $\tilde g(x,x,x)=\binom{g(x)}{3}$, $\tilde g(x,x,y)=\binom{g(x)}{2}g(y)$, and $\tilde g(x,y,z) =g(x)g(y)g(z)$. 
Also we assume that for each $x \in V(\scr F)$, $g(x) \leq 2$ implies $m_\scr{F} (x^3) = 0$, and $g(x)=1$ implies $m_\scr F (x^2,y)=0$ for every $y\in V(\scr F)$. 
\begin{theorem}  \label {hyp1}
Let $\scr F$ be a $k$-hyperedge-colored hypergraph and let $g$ be a function from $V(\scr F)$ into $\mathbb{N}$. Then there exists a $3$-uniform $g$-detachment $\scr G$ (possibly with multiple hyperedges) of $\scr F$ with amalgamation function $\Psi:V(\scr G)\rightarrow V(\scr F)$,  $g$  being the number function associated with $\Psi$, such that $\scr G$ satisfies the following conditions:
\begin{itemize}
\item [\textup{(A1)}] $d_\scr G(u) \approx d_\scr F(x)/g(x) $ for each $x\in V(\scr F)$ and each $u\in \Psi^{-1}(x);$
\item [\textup{(A2)}] $d_{\scr G(j)}(u) \approx d_{\scr F(j)}(x)/g(x)$ for each $x\in V(\scr F)$, each $u\in \Psi^{-1}(x)$ and each $j\in \{1,\ldots,k\} ;$
\item [\textup{(A3)}] $m_\scr G(u, v, w) \approx m_\scr F(x,y,z)/\tilde g(x,y,z) $ for every $x,y,z\in V(\scr F)$  with $g(x)\geq 3$ if $x=y=z$, and $g(x)\geq 2$ if $|\{x,y,z\}|=2$, and every triple of distinct vertices $u,v,w$ with  $u\in \Psi^{-1}(x)$, $v\in \Psi^{-1}(y)$ and $w\in \Psi^{-1}(z);$
\item [\textup{(A4)}] $m_{\scr G(j)}(u, v, w) \approx m_{\scr F(j)}(x,y,z)/\tilde g(x,y,z) $ for every $x,y,z\in V(\scr F)$ with $g(x)\geq 3$ if $x=y=z$, and $g(x)\geq 2$ if $|\{x,y,z\}|=2$,  every triple of distinct vertices $u,v,w$ with  $u\in \Psi^{-1}(x)$, $v\in \Psi^{-1}(y)$ and $w\in \Psi^{-1}(z)$ and  each $j\in \{1,\ldots,k\}.$
\end{itemize}
\end{theorem}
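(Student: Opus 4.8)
The plan is to argue by induction on $\sum_{x\in V(\scr F)}\big(g(x)-1\big)$, the total amount of detaching required. If this quantity is $0$ then $g\equiv 1$, and the standing hypotheses (that $g(x)=1$ forbids edges of type $\nabla(x^2,y)$ and that $g(x)\leq 2$ forbids edges of type $\nabla(x^3)$) force every edge of $\scr F$ to have three distinct vertices; thus $\scr F$ is already $3$-uniform and we take $\scr G=\scr F$. For the inductive step I would fix a vertex $\alpha$ with $g(\alpha)\geq 2$ and \emph{split off a single new vertex} $\beta$ from $\alpha$, producing an intermediate hypergraph $\scr F'$, again an amalgamation of a $3$-uniform hypergraph, in which $\alpha$ carries number $g(\alpha)-1$, the new vertex $\beta$ carries number $1$, and all other numbers are unchanged. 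One first records the \emph{forced} moves needed to keep $\scr F'$ legal (for instance, when $g(\alpha)=2$ every $\nabla(\alpha^2,y)$ edge must give up a hinge to $\beta$ so that $m_{\scr F'}(\alpha^2,y)=0$). Since $\sum(g'(x)-1)$ drops by one, the induction hypothesis applied to $(\scr F',g')$ finishes the detachment \emph{provided the single split is fair}, so the whole theorem reduces to the one-vertex step.

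Detaching $\beta$ amounts to choosing a set $S$ of hinges at $\alpha$ to reassign to $\beta$: an edge of type $\nabla(\alpha,y,z)$, $\nabla(\alpha^2,y)$, or $\nabla(\alpha^3)$ is either left alone or becomes $\nabla(\beta,y,z)$, $\nabla(\alpha,\beta,y)$, or $\nabla(\alpha^2,\beta)$ respectively (since $g'(\beta)=1$ forbids two hinges of one edge on $\beta$). The decisive observation I would establish is that, for \emph{every} class of hinges at $\alpha$ simultaneously, the correct number to move to $\beta$ is a $1/g(\alpha)$ fraction. This is clear for degree and colour-degree, and for the multiplicities it follows from a telescoping of the binomial coefficients hidden in $\tilde g$: since $\binom{g(\alpha)-1}{1}/\binom{g(\alpha)}{2}=2/g(\alpha)$ and $\binom{g(\alpha)-1}{2}/\binom{g(\alpha)}{3}=3/g(\alpha)$, once $\beta$ takes a $1/g(\alpha)$ share of the $2m_{\scr F}(\alpha^2,y)$ (resp.\ $3m_{\scr F}(\alpha^3)$) hinges in the corresponding class, the counts $m_{\scr F'}(\alpha,\beta,y)$ and $m_{\scr F'}(\alpha^2,\beta)$ come out to exactly the values demanded by (A3) and (A4). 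A pleasant by-product is that the constraint ``at most one hinge of each edge on $\beta$'' is \emph{automatic}: for the relevant types $|H(\scr F,\alpha,e)|/g(\alpha)\leq 1$ (using $g(\alpha)\geq 2$ for $\nabla(\alpha^2,\cdot)$ and $g(\alpha)\geq 3$ for $\nabla(\alpha^3)$), so a fair selection already picks at most one hinge per edge once the per-edge sets are included.

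The selection is to be produced by Nash-Williams' lemma on laminar families (Lemma \ref{laminarlem}), which returns a subset $S$ with $|S\cap F|\approx |F|/g(\alpha)$ for every member $F$ of a prescribed laminar family of subsets of $H(\scr F,\alpha)$. The natural candidates for $F$ are: the whole set $H(\scr F,\alpha)$ (for (A1)); the colour classes, i.e.\ the hinges at $\alpha$ lying in each $\scr F(j)$ (for (A2)); the type classes, i.e.\ the hinges grouped by the multiset of the other endpoints of their edge (for (A3)); their common refinements (for (A4)); and the per-edge sets $H(\scr F,\alpha,e)$ (for the uniformity constraint). One then moves the hinges of $S$ to $\beta$, checks (A1)--(A4) for $\beta$, and lets induction handle the remaining copies of $\alpha$; a routine telescoping of the relation $\approx$ through the successive fractions $1/g(\alpha),\,1/(g(\alpha)-1),\ldots$ then assembles the global conditions.

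I expect the genuine obstacle to be that the colour classes and the type classes are two \emph{different} partitions of $H(\scr F,\alpha)$ that cross, so they do not together form a laminar family. Consequently a single naive application can be made to deliver any three of the four conditions tightly---for example, a tree refining first by type, then by colour, then by edge yields (A1), (A3), (A4) and the uniformity bound, but only a summed, hence loose, version of (A2)---yet not all four at once. The heart of the argument is therefore to organize the four requirements into a genuinely laminar structure, or to deploy the lemma in a carefully staged sequence that repairs the crossing, so that (A1)--(A4) hold simultaneously with the tight $\approx$ tolerance. Verifying that this can be arranged, and that the forced moves are compatible with the balance guaranteed by the lemma, is where the main work lies.
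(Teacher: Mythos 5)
Your outline follows the paper's proof quite closely: split off one new vertex $\beta$ from a chosen $\alpha$ with $g(\alpha)\geq 2$ at a time, give $\beta$ a $1/g(\alpha)$ share of every relevant class of hinges at $\alpha$, use the identities $\binom{g(\alpha)-1}{1}/\binom{g(\alpha)}{2}=2/g(\alpha)$ and $\binom{g(\alpha)-1}{2}/\binom{g(\alpha)}{3}=3/g(\alpha)$ to see that a fair share of hinges produces the right multiplicities, include the per-edge hinge sets $H(\scr F,\alpha,e)$ so that $|H(\scr F,\alpha,e)|/g(\alpha)\leq 1$ makes the ``at most one hinge of each edge on $\beta$'' constraint (and the eventual $3$-uniformity) automatic, and then propagate everything through the remaining splits by transitivity of $\approx$. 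All of that is exactly what the paper does (its conditions (B1)--(B12) for one step and (D1)--(D17) for the accumulated effect). Your ``forced moves'' when $g(\alpha)=2$ are likewise not a separate mechanism in the paper: since $|Z\cap H(\scr F,\alpha,e)|\approx 2/g(\alpha)=1$ is forced to equal $1$, the fair selection does them for you.

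The one genuine gap is precisely the point you defer at the end. You pose the crossing of the colour partition and the type partition of $H(\scr F,\alpha)$ as an unresolved obstacle and suggest it must be repaired by restructuring or by a staged sequence of applications of the lemma. Neither is needed, and a staged sequence would in fact degrade the error bounds: the missing ingredient is the exact form of Nash-Williams' lemma, which equidistributes over the union of \emph{two} laminar families simultaneously. Given that, the paper simply takes $\scr A$ to consist of $H(\scr F,\alpha)$, the colour classes $H(\scr F(j),\alpha)$, and, inside each colour class, the per-edge sets $H(\scr F(j),\alpha,e)$ for the edges of types $\nabla(\alpha^2,y)$ and $\nabla(\alpha^3)$ (a chain under inclusion, hence laminar), and takes $\scr B$ to consist of the type classes $H(\scr F,\alpha,\nabla(\alpha,u,v))$ together with their colour refinements $H(\scr F(j),\alpha,\nabla(\alpha,u,v))$ (again laminar). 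These two families cross each other, but the lemma returns a single set $Z$ with $|Z\cap P|\approx |P|/g(\alpha)$ for every $P$ in either family, which is exactly the simultaneous tight control you need for (A1)--(A4) plus uniformity. Without invoking the two-family version (or supplying an equivalent device), your argument does not close, since, as you yourself observe, a single laminar tree can only realize three of the four requirements tightly.
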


\section{Factorization Consequences}\label{factorizationcor}
Throughout this section $n\geq 3$. It is easy to see that every factorizable hypergraph must be regular. If $\scr G$ is a $3$-uniform hypergraph with an $r$-factor, since each edge contributes 3 to the  sum of the degree of all vertices in an $r$-factor, $r | V(\scr G) |$ must be divisible by 3.

\subsection{Factorizations of $\lambda K_n^{3}$}
We first note that $\lambda K^{3}_n$ is $\lambda \binom{n-1}{2}$-regular, and $|E(\lambda K^{3}_n)|=\lambda \binom{n}{3}$. Throughout this section, $\scr F$ is a hypergraph consisting of a single vertex $x$ and $\lambda \binom{n}{3}$ loops incident with $x$, and $g:V(\scr F)\rightarrow \mathbb{N}$ is a function with $g(x)=n$. Note that $\lambda K^{3}_n$ is a $g$-detachment of $\scr F$.  

\begin{theorem} \label{lambdakn3r1rk}
$\lambda K_n^{3}$ is $(r_1,\ldots,r_k)$-factorizable if and only if  
\begin{itemize}
\item [\textup {(i)}]  $3\divides r_in$ for each $i$, $1\leq i\leq k$, and
\item [\textup {(ii)}] $\sum_{i=1}^{k} r_i=\lambda \binom{n-1}{2}$. 
\end{itemize} 
\end{theorem}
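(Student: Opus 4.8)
The plan is to prove necessity by elementary degree counting and to obtain sufficiency as an immediate application of the detachment theorem (Theorem \ref{hyp1}) to the single-vertex hypergraph $\scr F$. The entire content of the ``if'' direction is to choose a suitable $k$-hyperedge-coloring of the $\lambda\binom{n}{3}$ loops of $\scr F$ and then observe that the almost-equalities guaranteed by Theorem \ref{hyp1} collapse to exact equalities, because the relevant target values turn out to be integers.

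\emph{Necessity.} Suppose $\lambda K_n^{3}=\scr G_1\cup\cdots\cup\scr G_k$ with each $\scr G_i$ an $r_i$-factor. Since $|V(\lambda K_n^{3})|=n$ and each edge of the $r_i$-factor $\scr G_i$ contributes $3$ to the sum of the vertex degrees, we have $r_in=3|E(\scr G_i)|$, whence $3\divides r_in$, which is (i). For (ii), every vertex of $\lambda K_n^{3}$ has degree $\lambda\binom{n-1}{2}$, and because the factors partition $E(\lambda K_n^{3})$ their degrees add up: $\sum_{i=1}^{k}r_i=\lambda\binom{n-1}{2}$.

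\emph{Sufficiency.} Assume (i) and (ii). Recall that $\scr F$ has a single vertex $x$ incident with $\lambda\binom{n}{3}$ loops (each carrying three hinges at $x$), and that $g(x)=n$. First I would color the loops so that color class $j$ has exactly $r_jn/3$ loops. Each $r_jn/3$ is a nonnegative integer by (i), and by (ii) together with the identity $\tfrac{n}{3}\binom{n-1}{2}=\binom{n}{3}$ these sizes sum to precisely $\lambda\binom{n}{3}$, so such a coloring exists. Now apply Theorem \ref{hyp1} to this $k$-hyperedge-colored $\scr F$ with the given $g$; since $g(x)=n\geq 3$, the standing hypotheses on $m_{\scr F}(x^3)$ and $m_{\scr F}(x^2,y)$ are vacuously satisfied. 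This yields a $3$-uniform $g$-detachment $\scr G$ on $n$ vertices.

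The final step is to identify $\scr G$ with $\lambda K_n^{3}$ and its color classes with the desired factors, and this is the one place where the $\approx$ in (A1)--(A4) must be forced to equality. By (A3), for any three distinct vertices $u,v,w$ of $\scr G$ we have $m_{\scr G}(u,v,w)\approx m_{\scr F}(x^3)/\tilde g(x,x,x)=\lambda\binom{n}{3}/\binom{n}{3}=\lambda$; since $\lambda\in\mathbb{N}$ the definition of $\approx$ forces $m_{\scr G}(u,v,w)=\lambda$, and as $\scr G$ is $3$-uniform (no loops, no size-$2$ edges) this says exactly that $\scr G=\lambda K_n^{3}$. By (A2), for each color $j$ and each vertex $u$ we have $d_{\scr G(j)}(u)\approx d_{\scr F(j)}(x)/g(x)=(3\cdot r_jn/3)/n=r_j$, and again integrality of $r_j$ upgrades $\approx$ to $=$, so $\scr G(j)$ is $r_j$-regular, i.e.\ an $r_j$-factor. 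Hence the color classes form an $(r_1,\ldots,r_k)$-factorization of $\lambda K_n^{3}$. The only genuine obstacle is this passage from $\approx$ to $=$, and it is routine here precisely because every target quantity ($\lambda$ in (A3) and $r_j$ in (A2)) is an integer, so the floor and the ceiling in the definition of $\approx$ coincide.
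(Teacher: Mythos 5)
Your proposal is correct and follows essentially the same route as the paper: necessity by degree counting, and sufficiency by coloring the loops of the single-vertex amalgamation $\scr F$ so that $m_{\scr F(j)}(x^3)=r_jn/3$ and then invoking Theorem \ref{hyp1}, with (A2) and (A3) giving exact values. Your explicit remark that the $\approx$ relations collapse to equalities because the targets $r_j$ and $\lambda$ are integers is a point the paper uses implicitly; otherwise the two arguments coincide.
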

\begin{proof}
Suppose first that $\lambda K_n^{3}$ is $(r_1,\ldots,r_k)$-factorizable. The existence of each $r_i$-factor implies that  $3\divides r_in$ for each $i$, $1\leq i\leq k$. Since each $r_i$-factor is an $r_i$-regular spanning sub-hypergraph and $\lambda K^{3}_n$ is $\lambda \binom{n-1}{2}$-regular, we must have $\sum_{i=1}^{k} r_i=\lambda \binom{n-1}{2}$.  

Now assume (i)--(ii). We find a $k$-hyperedge-coloring for $\scr F$ such that 
$m_{\scr F(j)}(x^3)=r_jn/3$  for each $j\in \{1,\ldots,k\}.$
It is possible, because 
\begin{eqnarray*}
\sum_{j=1}^{k} m_{\scr F(j)}(x^3)& = & \sum_{j=1}^k \frac{r_jn}{3}=\frac{n}{3}\sum_{j=1}^k r_j\\
&=&\frac{\lambda n}{3}\binom{n-1}{2}=\lambda \binom{n}{3}=m_{\scr F}(x^3).
\end{eqnarray*}
Now by Theorem \ref{hyp1}, there exists a 3-uniform $g$-detachment $\scr G$ of $\scr F$ with $n$ vertices, say $v_1,\ldots, v_n$ such that 
by (A2) $d_{\scr G(j)}(v_i)=r_jn/n=r_j$ for each $i=1,\ldots, n$ and each $j\in \{1,\ldots,k\}$; and by (A3) $m_{\scr G}(v_r,v_s, v_t)=\lambda \binom{n}{3}/\binom{n}{3}=\lambda$ for distinct $r,s,t$, $1\leq r,s,t\leq n$. Therefore $\scr G\cong \lambda K_n^{3}$ and each color class $i$ is an $r_i$-factor for $i=1,\ldots, k$. 
\end{proof}

\subsection{Factorizations of $K_{m_1,\ldots,m_n}^{3} $}
We denote $K_{\underbrace{m,\ldots,m}_n}^{3}$ by $K_{m,\ldots,m}^{3}$ (so we don't write the under-brace when it is not ambiguous). We first note that $\lambda K_{m,\ldots,m}^{3}$ is a $\lambda \binom{n-1}{2}m^2$-regular hypergraph with $nm$ vertices and $\lambda \binom{n}{3}m^3$ edges.
Throughout this section, $\scr F =\lambda m^3K_n^{3}$ with vertex set $V(\scr F)=\{x_1,\ldots,x_n\}$, and $g:V(\scr F)\rightarrow \mathbb{N}$ is a function with $g(x_i)=m$ for $i=1,\ldots,n$. We observe that $\lambda K_{m,\ldots,m}^{3}$ is a $g$-detachment of $\scr F$.  

\begin{theorem}
$\lambda K_{m_1,\ldots,m_n}^{3}$ is $(r_1,\ldots,r_k)$-factorizable if and only if  
\begin{enumerate}
\item  [\textup {(i)}] $m_i=m_j:=m$ for $1\leq i<j\leq n$,
\item  [\textup {(ii)}] $3\divides r_imn$ for each $i$, $1\leq i\leq k$, and
\item   [\textup {(iii)}] $\sum_{i=1}^{k} r_i=\lambda\binom{n-1}{2}m^2$.  
\end{enumerate}
\end{theorem}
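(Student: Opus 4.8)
The plan is to separate necessity from sufficiency, reducing the sufficiency half to the two tools already at hand: the factorization theorem for $\lambda K_n^{3}$ (Theorem \ref{lambdakn3r1rk}) and the detachment theorem (Theorem \ref{hyp1}).

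For necessity, I would start from the fact (noted at the beginning of this section) that every factorizable hypergraph is regular. The degree of a vertex $v$ in the part $V_i$ of $\lambda K_{m_1,\ldots,m_n}^{3}$ counts the edges through $v$, each of which selects one further vertex from each of two other distinct parts; hence $d(v)=\lambda\, e_2(m_1,\ldots,\widehat{m_i},\ldots,m_n)$, the second elementary symmetric function of the part sizes with $m_i$ deleted. Writing $e_1=\sum_j m_j$ and $e_2=\sum_{j<l}m_jm_l$, one has $e_2(\ldots\widehat{m_i}\ldots)=e_2-m_i(e_1-m_i)$, so regularity forces $m_i(e_1-m_i)=m_{i'}(e_1-m_{i'})$ for all $i,i'$, i.e. $(m_i-m_{i'})(m_i+m_{i'}-e_1)=0$. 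Since $n\geq 3$ there is always a third part with positive size, so $e_1>m_i+m_{i'}$ and the second factor cannot vanish; thus $m_i=m_{i'}$, giving (i). With $|V|=mn$ now settled, (ii) and (iii) follow exactly as in the necessity half of Theorem \ref{lambdakn3r1rk}: the existence of an $r_i$-factor forces $3\mid r_imn$, and comparing the regularity of the factors with the $\lambda\binom{n-1}{2}m^2$-regularity of $\lambda K_{m,\ldots,m}^{3}$ forces $\sum_i r_i=\lambda\binom{n-1}{2}m^2$.

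For sufficiency, assume (i)--(iii) and recall that $\scr F=\lambda m^3K_n^{3}$ has $\lambda K_{m,\ldots,m}^{3}$ as a $g$-detachment with $g(x_i)=m$. The key step is to produce the right coloring of $\scr F$, and this is precisely a factorization of $\lambda m^3K_n^{3}$ into factors of degrees $r_1m,\ldots,r_km$: with $\lambda$ replaced by $\lambda m^3$ and each $r_i$ replaced by $r_im$, conditions (ii) and (iii) become exactly $3\mid (r_im)n$ and $\sum_i r_im=\lambda m^3\binom{n-1}{2}$, which are the two hypotheses of Theorem \ref{lambdakn3r1rk}. Hence Theorem \ref{lambdakn3r1rk} supplies a $k$-hyperedge-coloring of $\scr F$ in which color class $j$ is an $r_jm$-factor. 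I would then apply the detachment theorem (Theorem \ref{hyp1}) to this colored $\scr F$ with the function $g$. Because $\scr G$ is forced to be $3$-uniform, each edge of $\scr F$ (joining three distinct vertices $x_a,x_b,x_c$) detaches to an edge meeting the three disjoint parts $\Psi^{-1}(x_a),\Psi^{-1}(x_b),\Psi^{-1}(x_c)$ in one vertex each, so $\scr G$ is automatically $n$-partite with parts of size $m$. By (A3), $m_{\scr G}(u,v,w)\approx\lambda m^3/m^3=\lambda$, forcing $m_{\scr G}(u,v,w)=\lambda$ for every triple from distinct parts, so $\scr G\cong\lambda K_{m,\ldots,m}^{3}$; and by (A2), $d_{\scr G(j)}(u)\approx r_jm/m=r_j$, forcing $d_{\scr G(j)}(u)=r_j$, so color class $j$ is an $r_j$-factor.

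I expect the main obstacle to be the necessity of (i): unlike the single-vertex situation of Theorem \ref{lambdakn3r1rk}, here regularity is a genuine constraint on the part sizes, and the crux is the symmetric-function computation showing that equal ``deleted'' degrees force all $m_i$ to coincide, where the hypothesis $n\geq 3$ is used essentially. The sufficiency should go through smoothly once one recognizes that the required coloring of $\lambda m^3K_n^{3}$ is itself an instance of Theorem \ref{lambdakn3r1rk}, after which the detachment theorem does the remaining work; the only points needing care are that each ``$\approx$'' collapses to an equality (since the targets $r_j$ and $\lambda$ are integers) and that $3$-uniformity of $\scr G$ automatically rules out edges inside a part.
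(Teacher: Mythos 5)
Your proposal is correct and follows essentially the same route as the paper: necessity via regularity (your elementary-symmetric-function identity $(m_p-m_q)(e_1-m_p-m_q)=0$ is the same computation the paper carries out by directly manipulating the sums $\sum m_im_j$), and sufficiency by coloring $\scr F=\lambda m^3K_n^3$ using Theorem \ref{lambdakn3r1rk} with parameters $(\lambda m^3; r_1m,\ldots,r_km)$ and then detaching via Theorem \ref{hyp1}, with each $\approx$ collapsing to equality at integer targets. The only cosmetic difference is that the paper rules out edges meeting a part more than once by citing the zero cases of (A3), whereas you deduce it directly from $3$-uniformity of the detached edges; both are fine.
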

\begin{proof}
Suppose first that $\lambda K_{m_1,\ldots,m_n}^{3}$ is $r$-factorizable (so it is regular). Let $u$ and $v$ be two vertices from two different parts, say $p^{th}$ and $q^{th}$ parts respectively. Then we have the following sequence of equivalences:

\begin{align*}
d(u)&=d(v) &\iff\\
\sum\nolimits_{\scriptstyle 1 \leq i < j \leq n \hfill \atop  \scriptstyle i,j \ne p \hfill}  m_im_j  &=  \sum\nolimits_{\scriptstyle 1 \leq i < j \leq n \hfill \atop  \scriptstyle i,j \ne q \hfill}  m_im_j &\iff\\
m_q\sum\nolimits_{\scriptstyle 1 \leq i  \leq n \hfill \atop  \scriptstyle i \ne p,q \hfill}  m_i+\sum\nolimits_{\scriptstyle 1 \leq i < j \leq n \hfill \atop  \scriptstyle i,j \notin \{p,q\} \hfill}  m_im_j  &=
m_p\sum\nolimits_{\scriptstyle 1 \leq i  \leq n \hfill \atop  \scriptstyle i \ne p,q \hfill}  m_i+\sum\nolimits_{\scriptstyle 1 \leq i < j \leq n \hfill \atop  \scriptstyle i,j \notin \{p,q\} \hfill}  m_im_j &\iff\\
m_q\sum\nolimits_{\scriptstyle 1 \leq i  \leq n \hfill \atop  \scriptstyle i \ne p,q \hfill}  m_i&=m_p\sum\nolimits_{\scriptstyle 1 \leq i  \leq n \hfill \atop  \scriptstyle i \ne p,q \hfill}  m_i  &\iff\\
 (m_p-m_q)\sum\nolimits_{\scriptstyle 1 \leq i  \leq n \hfill \atop  \scriptstyle i \ne p,q \hfill}  m_i &=0 &\iff  \\
  m_p&=m_q:=m. &(n\geq 3)
\end{align*}

This proves (i).  The existence of each $r_i$-factor implies that  $3\divides r_imn$ for each $i$, $1\leq i\leq k$. Since each $r_i$-factor is an $r_i$-regular spanning sub-hypergraph and $K_{m,\ldots,m}^{3}$ is $\lambda \binom{n-1}{2}m^2$-regular, we must have $\sum_{i=1}^{k} r_i=\lambda \binom{n-1}{2}m^2$.  

Now assume (i)--(iii). Since $3\divides r_imn$ for each $i$, $1\leq i\leq k$ and  $\sum_{i=1}^{k} mr_i=\lambda\binom{n-1}{2}m^3$, by Theorem \ref{lambdakn3r1rk}, $\scr F$ is $(mr_1,\ldots,mr_k)$-factorizable. Therefore we can find a $k$-hyperedge-coloring for $\scr F$ such that 
$$d_{\scr F(j)}(x)=r_jm  \quad \forall j\in \{1,\ldots,k\}.$$
Now by Theorem \ref{hyp1}, there exists a 3-uniform $g$-detachment $\scr G$ of $\scr F$ with $mn$ vertices, say $x_{ij}$, $1\leq i \leq n$, $1\leq j\leq m$ ($x_{i1},\ldots,x_{im}$ are obtained by splitting $x_i$ into $m$ vertices for $i=1,\ldots,n$) such that by (A2) $d_{\scr G(t)}(x_{ij})=r_tm/m=r_t$ for each $i=1,\ldots, n$, $j=1,\ldots, m$, and each $t\in \{1,\ldots,k\}$; by (A3) $m_{\scr G}(x_{ij},x_{ij'}, x_{ij''})=0$ for $i=1\ldots,n$ and distinct $j,j',j''$, $1\leq j,j',j''\leq m$, if $m\geq 3$; by (A3) $m_{\scr G}(x_{ij},x_{ij'}, x_{i'j''})=0$ for distinct $i,i'$, $1\leq i,i'\leq n$ and distinct $j,j'$,  $1\leq j,j',j''\leq m$, if $m\geq 2$; and by (A3) $m_{\scr G}(x_{ij},x_{i'j'}, x_{i''j''})=\lambda m^3/(mmm)=\lambda$ for distinct $i,i',i''$, $1\leq i,i',i''\leq n$ and $1\leq j,j',j''\leq m$. Therefore $\scr G\cong \lambda K_{m,\ldots,m}^{3}$ and each color class $i$ is an $r_i$-factor for each $i\in \{1,\ldots,k\}$.
\end{proof}

\section{Proof of the Main Theorem} \label{hyp1proof}
Recall that $x\approx y$ means $\lfloor y \rfloor \leq x\leq \lceil y \rceil$. We observe that for $x, y \in \mathbb{R}, a, b, c \in \mathbb{Z}$, and $n\in \mathbb{N}$ (i) $a\approx x$ implies $a\in \{\lfloor x \rfloor, \lceil x \rceil \}$, (ii) $x\approx y$ implies $x/n\approx y/n$ (iii) the relation $\approx$ is transitive (but not symmetric), and (vi) $a=b-c$ and $c\approx x$, implies $a\approx b-x$. These properties of $\approx$ will be used in this section when required without further explanation.

A family $\scr A$ of sets is \textit{laminar} if, for every pair $A, B$ of sets belonging to $\scr A$, either $A\subset B$, or $B\subset A$, or $A\cap B=\varnothing$. To prove Theorem \ref{hyp1}, we need the following lemma:

\begin{lemma}\textup{(Nash-Williams \cite[Lemma 2]{Nash87})}\label{laminarlem}
If $\scr A, \scr B$ are two laminar families of subsets of a finite set $S$, and $\ell\in \mathbb N$, then there exist a subset $A$ of $S$ such that for every $P\in \scr A \cup \scr B$,  $|A\cap P|\approx |P|/\ell$. 
\end{lemma}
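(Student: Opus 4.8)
The plan is to recast the conclusion as an integer-rounding statement and to obtain it by rounding the uniform fractional point at a vertex of a totally unimodular polytope. Identifying each subset of $S$ with its $\{0,1\}$-indicator vector, let $M$ be the $\{0,1\}$-matrix whose rows are the indicator vectors of the members of $\scr A\cup\scr B$ and whose columns are indexed by $S$. Consider the polytope
\[
Q=\Bigl\{x\in\mathbb R^{S}\ :\ 0\le x_s\le 1\ (s\in S),\quad \Bigl\lfloor\tfrac{|P|}{\ell}\Bigr\rfloor\le\sum_{s\in P}x_s\le\Bigl\lceil\tfrac{|P|}{\ell}\Bigr\rceil\ (P\in\scr A\cup\scr B)\Bigr\}.
\]
The point $x_s=1/\ell$ for all $s$ lies in $Q$, since $\sum_{s\in P}1/\ell=|P|/\ell$ sits between $\lfloor|P|/\ell\rfloor$ and $\lceil|P|/\ell\rceil$ and $0\le 1/\ell\le1$; hence $Q$ is a nonempty bounded polytope and has at least one vertex. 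Once I know that the constraint matrix $\binom{M}{I}$ is totally unimodular (the box constraints being the identity block) and that the right-hand sides are integral, every vertex of $Q$ is integral; choosing an integral vertex $x^{*}\in\{0,1\}^{S}$ and setting $A=\{s\in S:x^{*}_s=1\}$ yields $|A\cap P|=\sum_{s\in P}x^{*}_s\in\{\lfloor|P|/\ell\rfloor,\dots,\lceil|P|/\ell\rceil\}$, that is $|A\cap P|\approx|P|/\ell$ for every $P\in\scr A\cup\scr B$, which is exactly the assertion.

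So everything reduces to the claim that $M$ --- the incidence matrix of the \emph{union} of two laminar families --- is totally unimodular; appending the identity block is harmless, so it suffices to treat $M$ itself. This is the step I expect to be the main obstacle, because for a \emph{single} laminar family total unimodularity is routine (ordering $S$ by a depth-first traversal of the laminar forest turns every member into an interval, so the matrix has the consecutive-ones property), whereas two families cannot in general be made simultaneously interval. To handle it I will invoke the classical characterization that a $\{0,1\}$-matrix can fail to be totally unimodular only if it contains, as a submatrix, the edge--vertex incidence matrix of an \emph{odd} cycle: a square submatrix with exactly two $1$'s in every row and every column whose ``shares a column'' graph on the rows is a single odd cycle.

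Suppose for contradiction that $M$ contained such a submatrix $N$, with rows $P_1,\dots,P_{2k+1}\in\scr A\cup\scr B$ and columns $C\subseteq S$, arranged so that cyclically consecutive rows $P_i,P_{i+1}$ share exactly one column of $C$ while each also has a column of $C$ absent from the other. Then the restrictions $P_i\cap C$ and $P_{i+1}\cap C$ \emph{cross}: they meet, yet neither contains the other. Since two members of the same laminar family are nested or disjoint, and both nestedness and disjointness are inherited by restriction to any subset of $S$, two sets with crossing restrictions must come from different families. Assigning to each $P_i$ the family ($\scr A$ or $\scr B$) to which it belongs therefore properly two-colours the odd cycle $P_1P_2\cdots P_{2k+1}P_1$, which is impossible. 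Hence no such $N$ exists, $M$ is totally unimodular, and the rounding argument above goes through.

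The only non-elementary ingredient is thus the odd-cycle characterization of totally unimodular $\{0,1\}$-matrices, and the genuinely new input is the observation that crossing forces a change of family, so an odd cycle of pairwise-crossing sets cannot be split between two laminar families. I note that one could instead try iterative rounding of the point $\mathbf 1/\ell$, moving along a nonzero direction supported on the fractional coordinates and orthogonal to the currently tight constraints until a coordinate becomes integral; but analysing when such a direction exists amounts, via Cramer's rule on the tight-constraint submatrix, to exactly the same total unimodularity. Either way, the two-laminar-families/odd-cycle fact is the heart of the matter.
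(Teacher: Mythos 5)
Your overall strategy---rounding the fractional point $\mathbf{1}/\ell$ at a vertex of the polytope $Q$, via total unimodularity of the incidence matrix of $\scr A\cup\scr B$ together with the Hoffman--Kruskal theorem---is sound, and the fact you need is true; note also that the paper offers no proof of this lemma to compare against, since it imports it directly from Nash-Williams. However, your proof of the step you yourself identify as ``the heart of the matter'' has a genuine gap: the ``classical characterization'' you invoke is false. Excluding submatrices that are edge--vertex incidence matrices of odd cycles characterizes \emph{balanced} matrices (Berge), not totally unimodular ones, and TU is strictly stronger. The standard counterexample is
\[
\begin{pmatrix} 1&1&1&1\\ 1&1&0&0\\ 1&0&1&0\\ 1&0&0&1\end{pmatrix},
\]
which contains no odd-cycle submatrix (the first row and first column are all ones, so neither can appear in a submatrix with exactly two ones per row and column, while the remaining $3\times 3$ block is the identity), yet has determinant $-2$. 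So your crossing/two-colouring argument---which is correct and elegant as far as it goes---only establishes that $M$ is balanced, not that it is TU. And balancedness is not enough for your rounding step: balanced matrices guarantee integrality only for special right-hand sides (all-ones covering, packing and partitioning systems), not for the two-sided system $\lfloor |P|/\ell\rfloor \le \sum_{s\in P}x_s\le\lceil |P|/\ell\rceil$ with arbitrary integral bounds, which is exactly what your polytope $Q$ requires.

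The gap is repairable, because the incidence matrix of a union of two laminar families really is TU; the clean route is Ghouila--Houri's criterion rather than a forbidden-submatrix theorem. Given any subset $R$ of the rows, sign each $P\in R\cap\scr A$ with $(-1)^{d_A(P)}$ and each $P\in R\cap\scr B$ with $(-1)^{d_B(P)+1}$, where $d_A(P)$ is the number of members of $R\cap\scr A$ strictly containing $P$ (and similarly $d_B$). For any $s\in S$, the members of $R\cap\scr A$ containing $s$ form a chain whose depths are consecutive integers starting at $0$, so their signed sum lies in $\{0,1\}$; symmetrically, the contribution of $R\cap\scr B$ lies in $\{-1,0\}$, so every column sum lies in $\{-1,0,1\}$. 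By Ghouila--Houri, $M$ (and hence $M$ with the identity block appended) is totally unimodular, and your vertex-rounding argument then completes the proof. This signing exploits the laminarity of each family separately and plays the two families against each other with opposite starting signs---the same structural point your odd-cycle two-colouring was aiming at, but in a form that actually certifies total unimodularity.
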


Let $\scr F =(V, E, H, \psi, \phi)$. Let $n = \sum\nolimits _{v \in V} (g (v) -1)$. Our proof of Theorem \ref{hyp1} consists of the following major parts. First,  in Section \ref{constG} we shall describe the construction of a sequence $\scr F_0=\scr F,\scr F_1,\dots,\scr F_n$ of hypergraphs where $\scr F_{i}$ is an amalgamation of $\scr F_{i+1}$ (so $\scr F_{i+1}$ is a detachment of $\scr F_{i}$) for $0\leq i\leq n-1$ with amalgamation function $\Phi_i$ that combines a vertex with amalgamation number 1 with one other vertex. 
To construct each $\scr F_{i+1}$ from $\scr F_i$ we will use two laminar families $\scr A_i$ and  $\scr B_i$. In Section \ref{recursionF_i} we shall observe some properties of $\scr F_{i+1}$ in terms of $\scr F_i$. 
As we will see in Section \ref{FiF}, the relations between $\scr F_{i+1}$ and $\scr F_i$ lead to conditions relating each $\scr F_i$, $1\leq i\leq n$ to the initial hypergraph $\scr F$. Finally, in Section \ref{FnF} we will show that $\scr F_n$ satisfies the conditions (A1)--(A4), so we can let $\scr G=\scr F_n$.  

\subsection{Construction of $\scr G$} \label{constG}
Initially we let $\scr F_0= \scr F$ and $g_0=g$, and we let $\Phi_0$ be the identity function from $V$ into  $V$. Now assume that $\scr F_0=(V_0, E_0, H_0, \psi_0,\phi_0),\ldots,\scr F_i=(V_i,E_i, H_i,\psi_i,\phi_i)$ and $\Phi_0,\ldots,\Phi_i$ have been defined for some $i\geq 0$. Also assume that $g_0:V_0\rightarrow\mathbb{N},\ldots, g_i:V_i\rightarrow\mathbb{N}$ have been defined  such that for each $j=0,\ldots, i$ and each $x \in V_j$, $g_j(x) \leq 2$ implies $m_{\scr{F}_j} (x^3) = 0$, and $g_j(x)=1$ implies $m_{\scr F_j} (x^2,y)=0$ for every $y\in V_j$. Let $\Psi_i=\Phi_0\ldots\Phi_i$. If $i=n$, we terminate the construction,  letting $\scr G=\scr F_n$ and  $\Psi=\Psi_n$. 

If $i< n$, we can select a vertex $\alpha$ of $\scr F_i$ such that  $g_i(\alpha)\geq 2$. As we will see, $\scr F_{i+1}$ is formed from $\scr F_i$ by detaching a vertex $v_{i+1}$ with amalgamation number 1 from $\alpha$. 

Let  $H_{ij}=H(\scr F_i(j), \alpha)$ for $j=1,\ldots, k$. If $e\in E_i$ incident with $\alpha$, we let $H_{ij}^e=H(\scr F_i(j), \alpha,e)$ for $j=1,\ldots, k$. Recall that by (\ref{edgeassump}),  $|H_{ij}^e|\leq 3$. Intuitively speaking, $H_{ij}$ is the set of all hinges which are incident with $\alpha$ and a hyperedge colored $j$, and $H_{ij}^e$ is a subset of $H_{ij}$ consisting of only those hinges incident with a single hyperedge $e$ colored $j$. 
Now let 
\begin{eqnarray}\label{lamAi}
\scr A_i & = &  \{H(\scr F_i,\alpha)\} \nonumber\\
&\bigcup & \{H_{i1},\ldots,H_{ik}\} \nonumber\\
& \bigcup & \{H_{ij}^e: e\in   \nabla(\alpha^2, y), y\in V_i, 1\leq j \leq k\}.
\end{eqnarray}
Note that 
\begin{eqnarray*}
\{H_{ij}^e: e\in   \nabla(\alpha^2, y), y\in V_i, 1\leq j \leq k\}& = &\{H_{ij}^e: e\in   \nabla(\alpha^3), 1\leq j \leq k\}\\ 
& \bigcup & \{H_{ij}^e: e\in   \nabla(\alpha^2, y), y\in V_i\backslash\{ \alpha\}, 1\leq j \leq k\}. 
\end{eqnarray*}
If $u,v\in V_i$, let $H_i^{uv}=H(\scr F_i,\nabla(\alpha, u, v))$ and $H_{ij}^{uv}= H(\scr F_i(j),\alpha, \nabla(\alpha, u, v))$ for $j=1,\ldots, k$. 
Now let
\begin{eqnarray} \label{lamBi}
\scr B_i&=&\{H_i^{uv}: u,v \in V_i\} \nonumber \\
&\bigcup &\{H_{ij}^{uv}: u,v \in V_i, 1\leq j \leq k\}.
\end{eqnarray}
It is easy to see that both $\scr A_i$ and $\scr B_i$ are laminar families of subsets of $H(\scr F_i, \alpha)$. 
Then, by Lemma \ref{laminarlem}, there exists a subset $Z_i$ of $H(\scr F_i, \alpha)$ such that 
\begin{equation}\label{lamapp1'} |Z_i\cap P|\approx |P|/g_i(\alpha),  \mbox{ for every  } P\in \scr A_i \cup \scr B_i. \end{equation}
Let $v_{i+1}$ be a vertex which does not belong to $V_i$ and let $V_{i+1}=V_i\cup \{v_{i+1}\}$.
Let $\Phi_{i+1}$ be the function from $V_{i+1}$ onto $V_i$ such that $\Phi_{i+1}(v)=v$ for every $v\in V_i$ and $\Phi_{i+1}(v_{i+1})=\alpha$. Let $\scr F_{i+1}$ be the detachment of $\scr F_i$ under $\Phi_{i+1}$ ($\scr F_i$ is the $\Phi_{i+1}$-amalgamation of $\scr F_{i+1}$) such that $V(\scr F_{i+1})=V_{i+1}$, and
 
\begin{equation}\label{hinge1} H(\scr F_{i+1},v_{i+1})=Z_i, H(\scr F_{i+1},\alpha)=H(\scr F_i, \alpha)\backslash Z_i. \end{equation}

In fact, $\scr F_{i+1}$ is obtained from $\scr F_i$ by splitting $\alpha$ into two vertices $\alpha$ and  $v_{i+1}$ in such a way that hinges which were incident with $\alpha$ in $\scr F_i$ become incident in $\scr F_{i+1}$ with $\alpha$ or $v_{i+1}$ according as they do not or do belong to $Z_i$, respectively. 
Obviously, $\Psi_i$ is an amalgamation function from $\scr F_{i+1}$ into $\scr F_i$. Let $g_{i+1}$ be the function from $V_{i+1}$ into $\mathbb N$, such that $g_{i+1}(v_{i+1})=1, g_{i+1}(\alpha)=g_i(\alpha)-1, g_{i+1}(v)=g_i(v)$ for every $v\in V_i\backslash\{\alpha\}$. This finishes the construction of $\scr F_{i+1}$. Now, we explore some relations between $\scr F_{i+1}$ and $\scr F_i$.   In the remainder of this paper, $d_i(.)$, and $m_i(.)$, $d(.)$, and $m(.)$ will denote  $d_{\scr F_i}(.)$, and $m_{\scr F_i}(.)$, $d_{\scr F}(.)$, and $m_{\scr F}(.)$, respectively.

\subsection{Relations between $\scr F_{i+1}$ and $\scr F_i$}\label{recursionF_i}
The hypergraph $\scr F_{i+1}$, described in \ref{constG}, satisfies the following conditions:
\begin{itemize}
\item [\textup{(B1)}] $d_{i+1}(\alpha) \approx d_{i}(\alpha)g_{i+1}(\alpha)/g_{i}(\alpha);$
\item [\textup{(B2)}] $d_{i+1}(v_{i+1}) \approx d_{i}(\alpha)/g_{i}(\alpha);$
\item [\textup{(B3)}] $m_{i+1}(\alpha,v^2) \approx m_{i}(\alpha,v^2)g_{i+1}(\alpha)/g_{i}(\alpha)$ for each $v\in V_i\backslash\{\alpha\};$
\item [\textup{(B4)}] $m_{i+1}(v_{i+1},v^2) \approx m_{i}(\alpha,v^2)/g_i(\alpha)$ for each $v\in V_i\backslash\{\alpha\};$
\item [\textup{(B5)}] $m_{i+1}(\alpha, u, v) \approx m_{i}(\alpha, u, v)g_{i+1}(\alpha)/g_{i}(\alpha)$ for every pair of distinct vertices $u,v\in V_i\backslash\{\alpha\};$
\item [\textup{(B6)}] $m_{i+1}(v_{i+1}, u, v) \approx m_{i}(\alpha, u, v)/g_i(\alpha)$ for every pair of distinct vertices $u,v\in V_i\backslash\{\alpha\};$
\item [\textup{(B7)}] $m_{i+1}(v^2_{i+1}, v)=0$ for each $v\in V_i\backslash\{\alpha\};$
\item [\textup{(B8)}] $m_{i+1}(\alpha, v_{i+1}, v) \approx 2m_{i}(\alpha^2, v)/g_i(\alpha)$ for each $v\in V_i\backslash\{\alpha\};$
\item [\textup{(B9)}] $m_{i+1}(\alpha^2, v) \approx m_{i}(\alpha^2, v)(g_{i+1}(\alpha)-1)/g_i(\alpha)$ for each $v\in V_i\backslash\{\alpha\};$
\item [\textup{(B10)}] $m_{i+1}(v_{i+1}^3)=m_{i+1}(v^2_{i+1}, \alpha)=0;$
\item [\textup{(B11)}] $m_{i+1}(\alpha^3) \approx m_{i}(\alpha^3)(g_{i+1}(\alpha)-2)/g_{i}(\alpha);$
\item [\textup{(B12)}] $m_{i+1}(v_{i+1}, \alpha^2) \approx 3m_{i}(\alpha^3)/g_i(\alpha);$
\end{itemize}

\begin{proof} Since $H(\scr F_{i},\alpha)\in \scr A_i$, from (\ref{hinge1}) it follows that
\begin{eqnarray*}
d_{i+1}(v_{i+1})& = & |H(\scr F_{i+1},v_{i+1})|=|Z_i|=|Z_i\cap H(\scr F_{i},\alpha)|\\
& \approx & |H(\scr F_{i},\alpha)|/g_i(\alpha)=d_{i}(\alpha)/g_{i}(\alpha),\\
d_{i+1}(\alpha) & = &   |H(\scr F_{i+1},\alpha)|= |H(\scr F_{i},\alpha)|-|Z_i| \nonumber \\
& \approx&  d_{i}(\alpha)-d_{i}(\alpha)/g_{i}(\alpha)=(g_i(\alpha)-1)d_{i}(\alpha)/g_{i}(\alpha)\nonumber \\
&= &  d_{i}(\alpha)g_{i+1}(\alpha)/g_{i}(\alpha).
\end{eqnarray*}
This proves (B1) and (B2). 

If $v \in V_i\backslash\{\alpha\}$, then $H_i^{vv}\in \scr B_i$ and so 
\begin{eqnarray*}
m_{i+1}(v_{i+1}, v^2)& = & |Z_i\cap H_{i}^{vv}|\approx |H_{i}^{vv}| /g_i(\alpha)=m_{i}(\alpha, v^2)/g_{i}(\alpha),\\
m_{i+1}(\alpha,v^2) & = &   |H_{i}^{vv}|-|Z_i\cap H_{i}^{vv}|\approx m_{i}(\alpha, v^2)-m_{i}(\alpha, v^2)/g_{i}(\alpha)\nonumber \\
& =&  (g_i(\alpha)-1)m_{i}(\alpha, v^2)/g_{i}(\alpha)\\
&=&m_{i}(\alpha, v^2)g_{i+1}(\alpha)/g_{i}(\alpha).
\end{eqnarray*}
This proves (B3) and (B4) (see Figure \ref{figure:detachpossiblities}(i)). 
\begin{figure}[htbp]
\begin{center}
\scalebox{.75}
{ \includegraphics {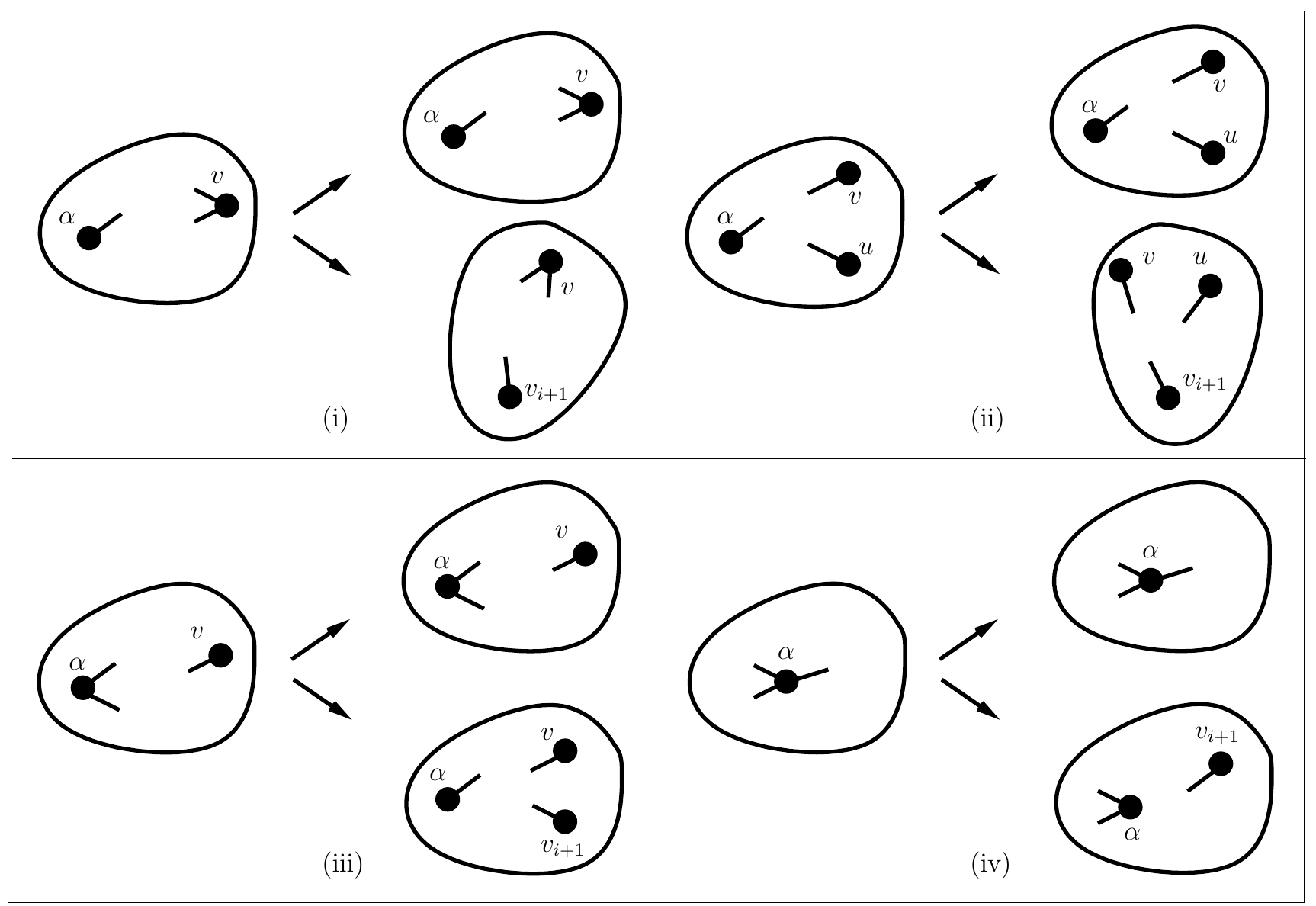} }
\caption{The four possibilities for detachment of a single edge incident with $\alpha$}
\label{figure:detachpossiblities}
\end{center}
\end{figure} 

If $u,v$ are a pair of distinct vertices in $V_i\backslash\{\alpha\}$, then $H_i^{uv}\in \scr B_i$ and so 
\begin{eqnarray*}
m_{i+1}(v_{i+1}, u, v)& = & |Z_i\cap H_{i}^{uv}|\approx |H_{i}^{uv}| /g_i(\alpha)=m_{i}(\alpha, u, v)/g_{i}(\alpha),\\
m_{i+1}(\alpha,u, v) & = &   |H_{i}^{uv}|-|Z_i\cap H_{i}^{uv}|\\
&\approx & m_{i}(\alpha, u, v)-m_{i}(\alpha, u, v)/g_{i}(\alpha)\nonumber \\
&= &  (g_i(\alpha)-1)m_{i}(\alpha, u, v)/g_{i}(\alpha)\\
&=&m_{i}(\alpha, u, v)g_{i+1}(\alpha)/g_{i}(\alpha).
\end{eqnarray*}
This proves (B5) and (B6) (see Figure \ref{figure:detachpossiblities}(ii)).  

If $v\in V_i\backslash\{\alpha\}$, and $e\in \nabla_{\scr F_i(j)}(\alpha^2, v)$, then $H_{ij}^{e}\in \scr A_i$, so 
$$\left |Z_i\cap H_{ij}^{e}\right |\approx |H_{ij}^{e}|/g_i(\alpha)=2/g_i(\alpha)\leq 1.$$
 Therefore either $|Z_i\cap H_{ij}^{e}|=1$ and consequently $e\in \nabla_{\scr F_{i+1}}(v_{i+1},\alpha, v)$ or $Z_i\cap H_{ij}^{e}=\varnothing$ and consequently $e\in \nabla_{\scr F_{i+1}} (\alpha^2,v)$. Therefore $$\nabla_{\scr F_{i+1}}(v^2_{i+1}, v)=\varnothing.$$ This proves (B7) (see Figure \ref{figure:detachpossiblities}(iii)). Moreover, since $H_{i}^{\alpha v}\in \scr B_i$
\begin{eqnarray*}
m_{i+1}(\alpha, v_{i+1}, v)& = & |Z_i\cap H_{i}^{\alpha v}|\approx |H_{i}^{\alpha v}| /g_i(\alpha)=2m_{i}(\alpha^2, v)/g_{i}(\alpha),\\
m_{i+1}(\alpha^2,v) & = &   m_{i}(\alpha^2, v)-|Z_i\cap H_{i}^{\alpha v}|\\
&\approx & m_{i}(\alpha^2, v)-2m_{i}(\alpha, u, v)/g_{i}(\alpha)\nonumber \\
& =&  (g_i(\alpha)-2)m_{i}(\alpha^2, v)/g_{i}(\alpha)\\
&=&m_{i}(\alpha^2, v)(g_{i+1}(\alpha)-1)/g_{i}(\alpha).
\end{eqnarray*}
This proves (B8) and (B9). We note that from (B9) it follows that if $g_{i+1}(\alpha)= 1$,  then $m_{i+1}(\alpha^2,v) =0$.

If $e$ is a loop in $\scr F_i(j)$ incident with $\alpha$, (so $g_i(\alpha)\geq 3$,) then $H_{ij}^{e}\in \scr A_i$. So 
$$|Z_i\cap H_{ij}^{e}|\approx |H_{ij}^{e}|/g_i(\alpha)=3/g_i(\alpha)\leq 1.$$ Therefore either $|Z_i\cap H_{ij}^{e}|=1$ and consequently $e\in \nabla_{\scr F_{i+1}}(\alpha^2, v_{i+1})$ or $Z_i\cap H_{ij}^{e}=\varnothing$ and consequently $e\in \nabla_{\scr F_{i+1}}(\alpha^3)$. Therefore 
$$\nabla_{\scr F_{i+1}}(v_{i+1}^3)=\nabla_{\scr F_{i+1}}(v^2_{i+1}, \alpha)=\varnothing.$$ This proves (B10) (see Figure \ref{figure:detachpossiblities}(iv)). Moreover,  
\begin{eqnarray*}
m_{i+1}(\alpha^2, v_{i+1})& = & |Z_i\cap H_{i}^{\alpha\alpha}|\approx |H_{i}^{\alpha\alpha}| /g_i(\alpha)=3m_{i}(\alpha^3)/g_{i}(\alpha),\\
m_{i+1}(\alpha^3) & = &  m_{i}(\alpha^3)-|Z_i\cap H_{i}^{\alpha\alpha}|\approx m_{i}(\alpha^3)-3m_{i}(\alpha^3)/g_{i}(\alpha)\nonumber \\
& =&  (g_i(\alpha)-3)m_{i}(\alpha^3)/g_{i}(\alpha)=m_{i}(\alpha^3)(g_{i+1}(\alpha)-2)/g_{i}(\alpha).
\end{eqnarray*}
This proves (B11) and (B12). We may note that from (B11) it follows that if $g_{i+1}(\alpha)= 2$,  then $m_{i+1}(\alpha^3)=0$. 
\end{proof}
A similar statement can be proved for every color class: Let us fix $j\in \{1,\ldots,k\}$, and let $u,v$ be a pair of distinct vertices in $V_i\backslash\{\alpha \}$. The colored version of (B7) and (B10) is trivial. Since $H_{ij}\in \scr A_i$, $H_{ij}^{vv}\in \scr B_i$, $H_{ij}^{uv}\in \scr B_i$, $H_{ij}^{\alpha v}\in \scr B_i$, $H_{ij}^{\alpha\alpha}\in \scr B_i$, respectively, we can obtain a colored version for (B1) and (B2), (B3) and (B4), (B5) and (B6), (B8) and (B9), and (B11) and (B12), respectively. 

\subsection{Relations between $\scr F_{i}$ and $\scr F$}\label{FiF}
Recall that $\Psi_i=\Phi_0\ldots\Phi_i$, that $\Phi_0:V\rightarrow V$, and that $\Phi_i:V_i\rightarrow V_{i-1}$ for $i>0$. Therefore $\Psi_i:V_i\rightarrow V$ and thus $\Psi_i^{-1}:V\rightarrow V_i$. 

Now we use (B1)--(B12) to prove that the hypergraph $\scr F_{i}$ satisfies the following conditions for $0\leq i \leq n:$
\begin{itemize}
\item [\textup{(D1)}] $d_{i}(x)/g_{i}(x) \approx d(x)/g(x)$ for each $x\in V;$
\item [\textup{(D2)}] $d_{i}(v_r) \approx d(x)/g(x)$ for each $x\in V$ and each $v_r\in \Psi_i^{-1}[x];$
\item [\textup{(D3)}] $m_{i}(x^3)/\binom{g_i(x)}{3} \approx m(x^3)/\binom{g(x)}{3}$ for each $x\in V$ with $g(x)\geq 3$ if $g_i(x)\geq 3$, and $m_{i}(x^3)=0$ otherwise;
\item [\textup{(D4)}] $m_{i}(v_r^3) =0$ for each $x\in V$ and each $v_r\in \Psi_i^{-1}[x];$
\item [\textup{(D5)}] $m_{i}(x^2, v_r)/\binom{g_i(x)}{2} \approx m(x^3)/\binom{g(x)}{3}$ for each $x\in V$ with $g(x)\geq 3$ and each $v_r\in \Psi_i^{-1}[x]$ if $g_i(x)\geq 2$, and $m_{i}(x^2, v_r)=0$ otherwise;
\item [\textup{(D6)}] $m_{i}(x, v_r, v_s)/g_i(x) \approx m(x^3)/\binom{g(x)}{3}$ for each $x\in V$ with $g(x)\geq 3$ and every pair of distinct vertices $v_r,v_s\in \Psi_i^{-1}[x];$
\item [\textup{(D7)}] $m_{i}(v_r, v_s,v_t) \approx m(x^3)/\binom{g(x)}{3}$ for each $x\in V$ with $g(x)\geq 3$ and every triple of distinct vertices $v_r,v_s,v_t\in \Psi_i^{-1}[x];$
\item [\textup{(D8)}] $m_{i}(x^2, y)/(\binom{g_{i}(x)}{2}g_{i}(y)) \approx m(x^2,y)/(\binom{g(x)}{2}g(y))$ for every pair of distinct vertices $x,y\in V$ with $g(x)\geq 2$ if $g_i(x)\geq 2$, and $m_{i}(x^2, y)=0$ otherwise;
\item [\textup{(D9)}] $m_{i}(x^2, v_t)/\binom{g_{i}(x)}{2} \approx m(x^2,y)/(\binom{g(x)}{2}g(y))$ for every pair of distinct vertices $x,y\in V$ with $g(x)\geq 2$ and each $v_t\in \Psi_i^{-1}[y]$ if $g_i(x)\geq 2$, and $m_{i}(x^2, v_t)=0$ otherwise;
\item [\textup{(D10)}] $m_{i}(x, v_r,y)/(g_i(x)g_i(y)) \approx m(x^2,y)/(\binom{g(x)}{2}g(y))$ for every pair of distinct vertices $x,y\in V$ with $g(x)\geq 2$ and each $v_r\in \Psi_i^{-1}[x];$
\item [\textup{(D11)}] $m_{i}(x,v_r, v_t) /g_i(x)\approx m(x^2,y)/(\binom{g(x)}{2}g(y))$  for every pair of distinct vertices $x,y\in V$ with $g(x)\geq 2$, each $v_r\in \Psi_i^{-1}[x]$ and each $v_t\in \Psi_i^{-1}[y];$
\item [\textup{(D12)}] $m_{i}(v_r, v_s,y)/g_i(y) \approx m(x^2,y)/(\binom{g(x)}{2}g(y))$ for every pair of distinct vertices $x,y\in V$ with $g(x)\geq 2$ and every pair of distinct vertices $v_r,v_s\in \Psi_i^{-1}[x];$
\item [\textup{(D13)}] $m_{i}(v_r, v_s,v_t) \approx m(x^2,y)/(\binom{g(x)}{2}g(y))$ for every pair of distinct vertices $x,y\in V$ with $g(x)\geq 2$, every pair of distinct vertices $v_r,v_s\in \Psi_i^{-1}[x]$ and each $v_t\in \Psi_i^{-1}[y];$
\item [\textup{(D14)}] $m_{i}(x, y,z)/(g_i(x)g_i(y)g_i(z)) \approx m(x,y,z)/(g(x)g(y)g(z))$ for every triple of distinct vertices $x,y,z\in V;$ 
\item [\textup{(D15)}] $m_{i}(x, y,v_t)/(g_i(x)g_i(y)) \approx m(x,y,z)/(g(x)g(y)g(z))$ for every triple of distinct vertices $x,y,z\in V$ and  each $v_t\in \Psi_i^{-1}[z];$
\item [\textup{(D16)}] $m_{i}(x, v_s,v_t)/g_i(x) \approx m(x,y,z)/(g(x)g(y)g(z))$ for every triple of distinct vertices $x,y,z\in V$, each $v_s\in \Psi_i^{-1}[y]$ and each $v_t\in \Psi_i^{-1}[z];$
\item [\textup{(D17)}] $m_{i}(v_r, v_s,v_t) \approx m(x,y,z)/(g(x)g(y)g(z))$ for every triple of distinct vertices $x,y,z\in V$, each $v_r\in \Psi_i^{-1}[x]$, each $v_s\in \Psi_i^{-1}[y]$ and  each $v_t\in \Psi_i^{-1}[z];$
\end{itemize}
\begin{proof}

Let $x,y,z$ be an arbitrary triple of distinct vertices of $V$. We prove (D1)--(D17) by induction. To verify (D1)--(D17) for $i=0$, recall that $\scr F_0=\scr F$, and $g _0(x)=g(x)$.

Obviously $d_{0}(x)/ g _0(x) = d(x)/g (x)$, and this proves (D1) for $i=0$. (D2) is trivial. If $g(x)\geq 3$, obviously $m_{0}(x^3)/\binom {g _0(x)}{3}  =  m(x^3)/\binom {g(x)}{3}$, and if $g(x)\leq 2$, by hypothesis of Theorem \ref{hyp1}, $m (x^3) = 0$. This proves (D3) for $i=0$. The proof of (D4)--(D17) for $i=0$ is similar and can be verified easily.

Now we will show that if $\scr F_i$ satisfies the conditions (D1)--(D17) for some $i< n$, then $\scr F_{i+1}$ (formed from $\scr F_i$ by detaching $v_{i+1}$ from the vertex $\alpha$) satisfies these conditions by replacing $i$ with $i+1$; we denote the corresponding conditions for $\scr F_{i+1}$ by (D1)$'$--(D17)$'$. If $g_{i+1}(x)=g_i(x)$, then (D1)$'$--(D7)$'$ are obviously true. So we just check  (D1)$'$--(D7)$'$ in the case where $x=\alpha$. 
Also if $g_{i+1}(x)=g_i(x)$ and $g_{i+1}(y)=g_i(y)$, then (D8)$'$--(D13)$'$ are clearly true. So in order to prove (D8)$'$--(D13)$'$, we shall assume that either $g_{i+1}(x)=g_i(x)-1$ or $g_{i+1}(y)=g_i(y)-1$ (so $\alpha \in \{x,y\}$). Similarly, if $g_{i+1}(x)=g_i(y),g_{i+1}(y)=g_i(y)$, and $g_{i+1}(z)=g_i(z)$, then (D14)$'$--(D17)$'$ are true.  Therefore to prove (D14)$'$--(D17)$'$ we shall assume that either $g_{i+1}(x)=g_i(x)-1$ or $g_{i+1}(y)=g_i(y)-1$ or $g_{i+1}(z)=g_i(z)-1$ (so $\alpha \in \{x,y,z\}$).  

\begin{itemize}
\item [\textup{(D1)$'$}] By (B1),  $d_{i+1}(\alpha)/g_{i+1}(\alpha) \approx d_{i}(\alpha)/g_{i}(\alpha)$, and by (D1) of the induction hypothesis $d_{i}(\alpha)/g_{i}(\alpha) \approx d(\alpha)/g(\alpha)$. Therefore 
$$\frac{d_{i+1}(\alpha)}{g_{i+1}(\alpha)}  \mathop\approx \limits^{\textup{(B1)}} \frac{d_{i}(\alpha)}{g_{i}(\alpha)} \mathop\approx \limits^{\textup{(D1)}}\frac{d(\alpha)}{g(\alpha)}.$$
This proves (D1)$'$. 
\item [\textup{(D2)$'$}] By (B2),  $d_{i+1}(v_{i+1}) \approx d_{i}(\alpha)/g_{i}(\alpha)$, and by (D1) of the induction hypothesis $d_{i}(\alpha)/g_{i}(\alpha) \approx d(\alpha)/g(\alpha)$. Therefore 
$$d_{i+1}(v_{i+1})  \mathop\approx \limits^{\textup{(B2)}} \frac{d_{i}(\alpha)}{g_{i}(\alpha)}\mathop\approx \limits^{\textup{(D1)}} \frac{d(\alpha)}{g(\alpha)}.$$
Since in forming $\scr F_{i+1}$ no hyperedge is detached from $v_r$ for each $v_r \in \Psi_i^{-1}[\alpha]$, we have $d_{i+1}(v_{r}) =d_{i}(v_{r})$. By (D2) of the induction hypothesis $d_{i}(v_{r})\approx d(\alpha)/g(\alpha)$ for each $v_r \in \Psi_i^{-1}[\alpha]$. Therefore
$$d_{i+1}(v_{r})=d_{i}(v_{r}) \mathop\approx \limits^{\textup{(D2)}}  \frac{d(\alpha)}{g(\alpha)}$$
for each $v_r \in \Psi_i^{-1}[\alpha]$. This proves (D2)$'$. 
\item [\textup{(D3)$'$}] Suppose $g(\alpha)\geq 3$. If $g_{i+1}(\alpha)\geq 3$, by (B11) 
\begin{eqnarray*}
\frac{m_{i+1}(\alpha^3)}{\binom{g_{i+1}(\alpha)}{3}} & \mathop\approx \limits^{\textup{(B11)}} &  \frac{m_{i}(\alpha^3)(g_{i+1}(\alpha)-2)}{g_{i}(\alpha)\binom{g_{i+1}(\alpha)}{3}} \\
& = & \frac{m_{i}(\alpha^3)(g_{i+1}(\alpha)-2)}{g_{i}(\alpha)g_{i+1}(\alpha)(g_{i+1}(\alpha)-1)(g_{i+1}(\alpha)-2)/6} \\
& = & \frac{m_{i}(\alpha^3)}{\binom{g_i(\alpha)}{3}}. 
\end{eqnarray*}
Since $g_{i}(\alpha)\geq 4>3$, by (D3) of the induction hypothesis $m_{i}(\alpha^3)/\binom{g_i(\alpha)}{3} \approx m(\alpha^3)/\binom{g(\alpha)}{3}$. Therefore
$$\frac{m_{i+1}(\alpha^3)}{\binom{g_{i+1}(\alpha)}{3}} \mathop\approx \limits^{\textup{(B11)}} \frac{m_{i}(\alpha^3)}{\binom{g_i(\alpha)}{3}}\mathop\approx \limits^{\textup{(D3)}}  \frac{m(\alpha^3)}{\binom{g(\alpha)}{3}}.$$
If $g_{i+1}(\alpha)< 3$, by (B11) $m_{i+1}(\alpha^3)=0$. This proves (D3)$'$. 
\item [\textup{(D4)$'$}] By (B10), $m_{i+1}(v_{i+1}^3)=0$. Moreover, $m_{i+1}(v_{r}^3)=m_{i}(v_{r}^3)=0$ for each $1\leq r\leq i$. This proves  (D4)$'$. 
\item [\textup{(D5)$'$}] Suppose $g(\alpha)\geq 3$. If $g_{i+1}(\alpha)\geq 2$, by (B12) 
\begin{eqnarray*}
\frac{m_{i+1}(\alpha^2, v_{i+1})}{\binom{g_{i+1}(\alpha)}{2} } & \mathop\approx \limits^{\textup{(B12)}} &  \frac{3m_{i}(\alpha^3)}{g_i(\alpha)\binom{g_{i+1}(\alpha)}{2}} \\
& = & \frac{3m_{i}(\alpha^3)}{g_{i}(\alpha)g_{i+1}(\alpha)(g_{i+1}(\alpha)-1)/2} \\
& = & \frac{m_{i}(\alpha^3)}{\binom{g_i(\alpha)}{3}}. 
\end{eqnarray*}
Since $g_{i}(\alpha)\geq 3$, by (D3) of the induction hypothesis $m_{i}(\alpha^3)/\binom{g_i(\alpha)}{3} \approx m(\alpha^3)/\binom{g(\alpha)}{3}$. Therefore
$$\frac{m_{i+1}(\alpha^2, v_{i+1})}{\binom{g_{i+1}(\alpha)}{2} }\mathop\approx \limits^{\textup{(B12)}} \frac{m_{i}(\alpha^3)}{\binom{g_i(\alpha)}{3}}  \mathop\approx \limits^{\textup{(D3)}} \frac{m(\alpha^3)}{\binom{g(\alpha)}{3}}.$$
By (B9) for each $v_r \in \Psi_i^{-1}[\alpha]$
\begin{eqnarray*}
\frac{m_{i+1}(\alpha^2, v_{r})}{\binom{g_{i+1}(\alpha)}{2} } & \mathop\approx \limits^{\textup{(B9)}} &  \frac{m_{i}(\alpha^2, v_r)(g_{i+1}(\alpha)-1)}{g_i(\alpha)\binom{g_{i+1}(\alpha)}{2}} \\
& = & \frac{m_{i}(\alpha^2, v_r)(g_{i+1}(\alpha)-1)}{g_i(\alpha)g_{i+1}(\alpha)(g_{i+1}(\alpha)-1)/2} \\
& = & \frac{m_{i}(\alpha^2, v_r)}{\binom{g_i(\alpha)}{2}}. 
\end{eqnarray*}
Since $g_{i}(\alpha)\geq 3>2$, by (D5) of the induction hypothesis we have $m_{i}(\alpha^2, v_r)/\binom{g_i(\alpha)}{2} \approx m(\alpha^3)/\binom{g(\alpha)}{3}$ for each $v_r\in \Psi_i^{-1}[\alpha]$. Therefore  
$$\frac{m_{i+1}(\alpha^2, v_{r})}{\binom{g_{i+1}(\alpha)}{2}} \mathop\approx \limits^{\textup{(B9)}} \frac{m_{i}(\alpha^2, v_r)}{\binom{g_i(\alpha)}{2}} \mathop\approx \limits^{\textup{(D5)}}\frac{m(\alpha^3)}{\binom{g(\alpha)}{3}}$$
for each $v_r\in \Psi_i^{-1}[\alpha]$. If $g_{i+1}(\alpha)=1$, by (B9) it follows that $m_{i+1}(\alpha^2, v_{r})=0$ for each $v_r \in \Psi_{i+1}^{-1}[\alpha]$. This proves (D5)$'$.
\item [\textup{(D6)$'$}] Suppose $g(\alpha)\geq 3$ and $v_r,v_s$ are a pair of distinct vertices in $ \Psi_i^{-1}[\alpha]$. From (B5) it follows that $m_{i+1}(\alpha, v_r, v_s)/g_{i+1}(\alpha) \approx m_{i}(\alpha, v_r, v_s)/g_i(\alpha)$. By (D6) of the induction hypothesis $m_{i}(\alpha, v_r, v_s)/g_i(\alpha) \approx m(\alpha^3)/\binom{g(\alpha)}{3}$. Therefore 
$$\frac{m_{i+1}(\alpha, v_r, v_s)}{g_{i+1}(\alpha)} \mathop\approx \limits^{\textup{(B5)}} \frac{m_{i}(\alpha, v_r, v_s)}{g_i(\alpha)} \mathop\approx \limits^{\textup{(D6)}}\frac{m(\alpha^3)}{\binom{g(\alpha)}{3}}.$$ 
From (B8) it follows that 
\begin{eqnarray*}
\frac{m_{i+1}(\alpha, v_r, v_{i+1})}{g_{i+1}(\alpha)}   \mathop\approx \limits^{\textup{(B8)}}   \frac{2m_{i}(\alpha^2, v_r)}{g_i(\alpha)g_{i+1}(\alpha)} =\frac{m_{i}(\alpha^2, v_r)}{\binom{g_i(\alpha)}{2} }.
\end{eqnarray*}
By (D5) of the induction hypothesis $m_{i}(\alpha^2, v_r)/\binom{g_i(\alpha)}{2} \approx m(\alpha^3)/\binom{g(\alpha)}{3}$. Therefore 
$$\frac{m_{i+1}(\alpha, v_r, v_{i+1})}{g_{i+1}(\alpha)}  \mathop\approx \limits^{\textup{(B8)}}  \frac{m_{i}(\alpha^2, v_r)}{\binom{g_i(\alpha)}{2} }\mathop\approx \limits^{\textup{(D5)}} \frac{m(\alpha^3)}{\binom{g(\alpha)}{3}}.$$
This proves (D6)$'$.
\item [\textup{(D7)$'$}] Suppose $g(\alpha)\geq 3$ and $v_r,v_s,v_t$ are a triple of distinct vertices in $ \Psi_i^{-1}[\alpha]$. Since in forming $\scr F_{i+1}$ no hyperedge is detached from $v_r, v_s, v_t$, we have $m_{i+1}(v_r, v_s,v_t) = m_{i}(v_r, v_s,v_t)$. But by (D7) of the induction hypothesis, $m_{i}(v_r, v_s,v_t)\approx  m(\alpha^3)/\binom{g(\alpha)}{3}$. Therefore 
$$m_{i+1}(v_r, v_s,v_t) =m_{i}(v_r, v_s,v_t)  \mathop\approx \limits^{\textup{(D7)}}  \frac{m(\alpha^3)}{\binom{g(\alpha)}{3}}.$$
By (B6) $m_{i+1}(v_r, v_s,v_{i+1}) \approx m_{i}(\alpha, v_r,v_s)/g_i(\alpha)$. By (D6) of the induction hypothesis $m_{i}(\alpha, v_r,v_s)/g_i(\alpha) \approx  m(\alpha^3)/\binom{g(\alpha)}{3}$. Therefore 
$$m_{i+1}(v_r, v_s,v_{i+1})   \mathop\approx \limits^{\textup{(B6)}} \frac{m_{i}(\alpha, v_r,v_s)}{g_i(\alpha)}\mathop\approx \limits^{\textup{(D6)}}  \frac{m(\alpha^3)}{\binom{g(\alpha)}{3}}.$$
This proves (D7)$'$. 
\item [\textup{(D8)$'$}] \textbf{Case 1:} If $g_{i+1}(x)=g_i(x)-1$ (so $x=\alpha$),  by (B9) $m_{i+1}(\alpha^2, y) \approx m_{i}(\alpha^2, y)(g_{i+1}(\alpha)-1)/g_i(\alpha)$ which is $0$ if $g_{i+1}(\alpha)=1$. If $g_{i+1}(\alpha)\geq 2$, by (B9)
\begin{eqnarray*}
\frac{m_{i+1}(\alpha^2, y)}{\binom{g_{i+1}(\alpha)}{2}g_{i+1}(y)} & \mathop\approx \limits^{\textup{(B9)}} &  \frac{m_{i}(\alpha^2, y)(g_{i+1}(\alpha)-1)}{g_i(\alpha)\binom{g_{i+1}(\alpha)}{2}g_{i+1}(y)} \\
& = & \frac{m_{i}(\alpha^2, y)(g_{i+1}(\alpha)-1)}{g_i(\alpha)g_{i+1}(\alpha)(g_{i+1}(\alpha)-1)g_i(y)/2} \\
& = & \frac{m_{i}(\alpha^2, y)}{\binom{g_{i}(\alpha)}{2}g_{i}(y)}. 
\end{eqnarray*}
Since $g_{i}(\alpha)\geq 3>2$, by (D8) of the induction hypothesis $m_{i}(\alpha^2, y)/(\binom{g_{i}(\alpha)}{2}g_{i}(y)) \approx m(\alpha^2,y)/(\binom{g(\alpha)}{2}g(y))$. Therefore 
$$\frac{m_{i+1}(\alpha^2, y)}{\binom{g_{i+1}(\alpha)}{2}g_{i+1}(y)} \mathop\approx \limits^{\textup{(B9)}} \frac{m_{i}(\alpha^2, y)}{\binom{g_{i}(\alpha)}{2}g_{i}(y)}\mathop\approx \limits^{\textup{(D8)}} \frac{m(\alpha^2,y)}{\binom{g(\alpha)}{2}g(y)}.$$
\textbf{Case 2:} If $g_{i+1}(y)=g_i(y)-1$ (so $y=\alpha$), by (B3) $m_{i+1}(x^2, \alpha) \approx m_{i}(x^2,\alpha)g_{i+1}(\alpha)/g_{i}(\alpha)$ which  is 0 by (D8) of the induction hypothesis, if $g_{i+1}(x)=g_i(x)=1$. If $g_{i+1}(x) \geq 2$, by (B3) and (D8) of the induction hypothesis
$$\frac{m_{i+1}(x^2, \alpha)}{\binom{g_{i+1}(x)}{2}g_{i+1}(\alpha)} \mathop  \approx \limits^{\textup{(B3)}}   \frac{m_{i}(x^2, \alpha)}{\binom{g_{i+1}(x)}{2}g_{i}(\alpha)}=\frac{m_{i}(x^2, \alpha)}{\binom{g_{i}(x)}{2}g_{i}(\alpha)}\mathop  \approx \limits^{\textup{(D8)}}  \frac{m(x^2,\alpha)}{\binom{g(x)}{2}g(\alpha)}.$$
This proves (D8)$'$.
\item [\textup{(D9)$'$}] Suppose $v_t\in \Psi_{i}^{-1}[y]$. There are two cases:\\
\textbf{Case 1:} If $g_{i+1}(x)=g_i(x)-1$ (so $x=\alpha$), by (B9) $m_{i+1}(\alpha^2, v_t) \approx  m_{i}(\alpha^2, v_t)(g_{i+1}(\alpha)-1)/g_i(\alpha)$ which is $0$ if  $g_{i+1}(\alpha)=1$. If $g_{i+1}(\alpha)\geq 2$, by (B9)
\begin{eqnarray*}
\frac{m_{i+1}(\alpha^2, v_t)}{\binom{g_{i+1}(\alpha)}{2} } &  \mathop\approx\limits^{\textup{(B9)}}  &  \frac{m_{i}(\alpha^2, v_t)(g_{i+1}(\alpha)-1)}{g_i(\alpha)\binom{g_{i+1}(\alpha)}{2}} \\
& = & \frac{m_{i}(\alpha^2, v_t)(g_{i+1}(\alpha)-1)}{g_i(\alpha)g_{i+1}(\alpha)(g_{i+1}(\alpha)-1)/2} \\
& = & \frac{m_{i}(\alpha^2, v_t)}{\binom{g_{i}(\alpha)}{2}}. 
\end{eqnarray*}
Since $g_{i}(\alpha)\geq 3>2$, by (D9) of the induction hypothesis we have $m_{i}(\alpha^2, v_t)/\binom{g_{i}(\alpha)}{2} \approx m(\alpha^2,y)/(\binom{g(\alpha)}{2}g(y))$. Therefore 
$$\frac{m_{i+1}(\alpha^2, v_t)}{\binom{g_{i+1}(\alpha)}{2}}  \mathop\approx\limits^{\textup{(B9)}}  \frac{m_{i}(\alpha^2, v_t)}{\binom{g_{i}(\alpha)}{2}} \mathop\approx\limits^{\textup{(D9)}}  \frac{m(\alpha^2,y)}{\binom{g(\alpha)}{2}g(y)}.$$
\textbf{Case 2:} If $g_{i+1}(y)=g_i(y)-1$ (so $y=\alpha$), since in forming $\scr F_{i+1}$ no hyperedge is detached from $v_t$ and $x$, we have $m_{i+1}(x^2, v_t) =m_{i}(x^2, v_t)$ which  is 0 by (D9) of the induction hypothesis, if $g_{i+1}(x)=g_i(x)=1$. If $g_{i+1}(x) \geq 2$, by (D9) of the induction hypothesis
$$\frac{m_{i+1}(x^2, v_t)}{\binom{g_{i+1}(x)}{2}} =\frac{m_{i}(x^2, v_t)}{\binom{g_{i}(x)}{2}}\mathop  \approx \limits^{\textup{(D9)}}  \frac{m(x^2,\alpha)}{\binom{g(x)}{2}g(\alpha)}.$$
By (B4), $m_{i+1}(v_{i+1},x^2) \approx m_{i}(\alpha,x^2)/g_i(\alpha)$ which is 0 by (D8) of the induction hypothesis, if $g_{i+1}(x)=g_i(x)=1$. If $g_{i+1}(x) \geq 2$, by (B4) and (D8) of the induction hypothesis  
$$\frac{m_{i+1}(x^2, v_{i+1})}{\binom{g_{i+1}(x)}{2}}  \mathop  \approx \limits^{\textup{(B4)}}   \frac{m_{i}(x^2, \alpha)}{\binom{g_{i+1}(x)}{2}g_i(\alpha)} 
= \frac{m_{i}(x^2, \alpha)}{\binom{g_{i}(x)}{2}g_i(\alpha)}  \mathop  \approx \limits^{\textup{(D8)}}  \frac{m_{i}(x^2, \alpha)}{\binom{g(x)}{2}g(\alpha)}. $$
This proves (D9)$'$.

\item [\textup{(D10)$'$}] Suppose $v_r\in \Psi_{i}^{-1}[x]$. There are two cases:\\
\textbf{Case 1:} If $g_{i+1}(x)=g_i(x)-1$ (so $x=\alpha$), by (B5) $m_{i+1}(\alpha, v_r,y)/g_{i+1}(\alpha)\approx m_{i}(\alpha, v_r,y)/g_{i}(\alpha)$. Therefore by (D10) of the induction hypothesis
\begin{eqnarray*}
\frac{m_{i+1}(\alpha, v_r,y)}{g_{i+1}(\alpha)g_{i+1}(y)} &  \mathop\approx\limits^{\textup{(B5)}}  &  \frac{m_{i}(\alpha, v_r,y)}{g_{i}(\alpha)g_{i+1}(y)} \\
& = & \frac{m_{i}(\alpha, v_r,y)}{g_{i}(\alpha)g_{i}(y)} \mathop  \approx \limits^{\textup{(D10)}} \frac{m(\alpha^2,y)}{\binom{g(\alpha)}{2}g(y)}. 
\end{eqnarray*} 
By (B8) $m_{i+1}(\alpha, v_{i+1},y)\approx 2m_{i}(\alpha^2, y)/g_i(\alpha)$. Therefore since $g_i(\alpha)\geq 2$, by (D8) of the induction hypothesis 
\begin{eqnarray*}
\frac{m_{i+1}(\alpha, v_{i+1},y)}{g_{i+1}(\alpha)g_{i+1}(y)} & \mathop  \approx \limits^{\textup{(B8)}}  &  \frac{2m_{i}(\alpha^2, y)}{g_i(\alpha)g_{i+1}(\alpha)g_{i+1}(y)} \\
& = & \frac{m_{i}(\alpha^2, y)}{\binom{g_i(\alpha)}{2}g_{i}(y)} \mathop  \approx \limits^{\textup{(D8)}} \frac{m(\alpha^2,y)}{\binom{g(\alpha)}{2}g(y)}. 
\end{eqnarray*} 
\textbf{Case 2:} If $g_{i+1}(y)=g_i(y)-1$ (so $y=\alpha$), by (B5) we have $m_{i+1}(x, v_r,\alpha)/g_{i+1}(\alpha) \approx m_{i}(x, v_r,\alpha)/g_{i}(\alpha)$. Therefore by (D10) of the induction hypothesis 
\begin{eqnarray*}
\frac {m_{i+1}(x, v_r,\alpha)}{g_{i+1}(x)g_{i+1}(\alpha)} & \mathop  \approx \limits^{\textup{(B5)}}  &  \frac{m_{i}(x, v_r,\alpha)}{g_{i+1}(x)g_{i}(\alpha)} \\
& = & \frac{m_{i}(x, v_r,\alpha)}{g_{i}(x)g_{i}(\alpha)} \mathop  \approx \limits^{\textup{(D10)}} \frac{m(x^2,\alpha)}{\binom{g(x)}{2}g(\alpha)}. 
\end{eqnarray*} 
This proves (D10)$'$.
\item [\textup{(D11)$'$}]  Suppose $v_r\in \Psi_{i}^{-1}[x],v_t\in \Psi_{i}^{-1}[y]$. There are two cases:\\
\textbf{Case 1:} If $g_{i+1}(x)=g_i(x)-1$ (so $x=\alpha$), by (B5) and (D11) of the induction hypothesis
$$\frac{m_{i+1}(\alpha,v_r, v_{t})}{g_{i+1}(\alpha)}  \mathop  \approx \limits^{\textup{(B5)}} \frac{m_{i}(\alpha,v_r, v_t) }{g_i(\alpha)} \mathop  \approx \limits^{\textup{(D11)}} \frac{m(\alpha^2,y)}{\binom{g(\alpha)}{2}g(y)}.$$
By (B8) $m_{i+1}(\alpha,v_{i+1}, v_{t}) \approx 2m_{i}(\alpha^2, v_t)/g_i(\alpha)$. Therefore by (D10) of the induction hypothesis
\begin{eqnarray*}
\frac {m_{i+1}(\alpha,v_{i+1}, v_{t})}{g_{i+1}(\alpha)} & \mathop  \approx \limits^{\textup{(B8)}}  &  \frac{2m_{i}(\alpha^2, v_t)}{g_i(\alpha)g_{i+1}(\alpha)} \\
& = & \frac{m_{i}(\alpha,v_r, y) }{\binom{g_i(\alpha)}{2}} \mathop  \approx \limits^{\textup{(D10)}} \frac{m(\alpha^2,y)}{\binom{g(\alpha)}{2}g(y)}. 
\end{eqnarray*} 
\textbf{Case 2:} If $g_{i+1}(y)=g_i(y)-1$ (so $y=\alpha$), since in forming $\scr F_{i+1}$ no hyperedge is detached from $x,v_r$ and $v_t$, we have $m_{i+1}(x,v_r, v_{t})= m_{i}(x,v_r, v_t)$. Therefore by (D11) of the induction hypothesis
$$\frac{m_{i+1}(x,v_r, v_{t}) }{g_{i+1}(x)} = \frac{m_{i}(x,v_r, v_t) }{g_{i+1}(x)}=\frac{m_{i}(x,v_r, v_t)}{g_{i}(x)}\mathop  \approx \limits^{\textup{(D11)}}  \frac{m(x^2,\alpha)}{\binom{g(x)}{2}g(\alpha)}.$$
By (B6) $m_{i+1}(v_{i+1},x,v_r) \approx m_{i}(\alpha,x,v_r) /g_i(\alpha)$. Therefore by (D10) of the induction hypothesis
$$\frac{m_{i+1}(x,v_r, v_{i+1})}{g_{i+1}(x)} \mathop  \approx \limits^{\textup{(B6)}} \frac{m_{i}(x,v_r, \alpha) }{g_{i+1}(x)g_i(\alpha)}=\frac{m_{i}(x,v_r, \alpha)}{g_i(x)g_i(\alpha)}\mathop  \approx \limits^{\textup{(D10)}} \frac{m(x^2,\alpha)}{\binom{g(x)}{2}g(\alpha)}.$$
This proves (D11)$'$.
\item [\textup{(D12)$'$}] Suppose $v_r,v_s\in \Psi_{i}^{-1}[x]$. There are two cases:\\
\textbf{Case 1:} If $g_{i+1}(x)=g_i(x)-1$ (so $x=\alpha$), since in forming $\scr F_{i+1}$ no hyperedge is detached from $v_r,v_s$ and $y$, we have $m_{i+1}(v_r, v_s,y)=m_{i}(v_r, v_s,y)$. Therefore by (D12) of the induction hypothesis
$$\frac{m_{i+1}(v_r, v_s,y)}{g_{i+1}(y)}=\frac{m_{i}(v_r, v_s,y)}{g_{i+1}(y)}=\frac{m_{i}(v_r, v_s,y)}{g_{i}(y)} \mathop  \approx \limits^{\textup{(D12)}} \frac{m(\alpha^2,y)}{\binom{g(\alpha)}{2}g(y)}.$$
By (B6) $m_{i+1}(v_{i+1},v_r, y) \approx m_{i}(\alpha, v_{r},y)/g_{i}(\alpha)$. Therefore by (D10) of the induction hypothesis
\begin{eqnarray*}
\frac{m_{i+1}(v_{i+1},v_r, y)}{g_{i+1}(y)}\mathop  \approx \limits^{\textup{(B6)}} \frac{m_{i}(\alpha, v_{r},y)}{g_{i}(\alpha)g_{i+1}(y)}=\frac{m_{i}(\alpha, v_{r},y)}{g_{i}(\alpha)g_{i}(y)}\mathop  \approx \limits^{\textup{(D10)}} \frac{m(\alpha^2,y)}{\binom{g(\alpha)}{2}g(y)}.
\end{eqnarray*}
\textbf{Case 2:} If $g_{i+1}(y)=g_i(y)-1$ (so $y=\alpha$), by (B5) and (D12) of the induction hypothesis 
$$\frac{m_{i+1}(v_r, v_s,\alpha)}{g_{i+1}(\alpha)}\mathop  \approx \limits^{\textup{(B5)}} \frac{m_{i}(v_r, v_s,\alpha)}{g_{i}(\alpha)} \mathop  \approx \limits^{\textup{(D12)}} \frac{m(x^2,\alpha)}{\binom{g(x)}{2}g(\alpha)}.$$
This proves (D12)$'$.
\item [\textup{(D13)$'$}] Suppose $v_r,v_s\in \Psi_{i}^{-1}[x],v_t\in \Psi_{i}^{-1}[y]$. Since in forming $\scr F_{i+1}$ no hyperedge is detached from $v_r,v_s$ and $v_t$, we have $m_{i+1}(v_r, v_s,v_t) =m_{i}(v_r, v_s,v_t)$. Therefore by (D13) of the induction hypothesis
 $$m_{i+1}(v_r, v_s,v_t) \mathop  \approx \limits^{\textup{(D13)}} \frac{m(x^2,y)}{\binom{g(x)}{2}g(y)}.$$
If $g_{i+1}(x)=g_i(x)-1$ (so $x=\alpha$),  by (B6) and (D11) of the induction hypothesis
$$m_{i+1}(v_r, v_{i+1},v_t)  \mathop  \approx \limits^{\textup{(B6)}}  \frac{m_{i}(\alpha, v_r, v_t)}{g_{i}(\alpha)} \mathop  \approx \limits^{\textup{(D11)}}  \frac{m(\alpha^2,y)}{\binom{g(\alpha)}{2}g(y)}.$$
If $g_{i+1}(y)=g_i(y)-1$ (so $y=\alpha$), by (B6) and (D12) of the induction hypothesis
$$m_{i+1}(v_r, v_s,v_{i+1}) \mathop  \approx \limits^{\textup{(B6)}} \frac{m_{i}(\alpha, v_r, v_s)}{g_{i}(y)}\mathop  \approx \limits^{\textup{(D12)}} \frac{m(x^2,\alpha)}{\binom{g(x)}{2}g(\alpha)}.$$
This proves (D13)$'$.
\item [\textup{(D14)$'$}]  If $g_{i+1}(x)=g_i(x)-1$ (so $x=\alpha$) , by (B5) $m_{i+1}(\alpha, y,z)/g_{i+1}(\alpha) \approx m_{i}(\alpha, y,z)/g_{i}(\alpha)$. Therefore by (D14) of the induction hypothesis
\begin{eqnarray*}
\frac{m_{i+1}(\alpha, y,z)}{g_{i+1}(\alpha)g_{i+1}(y)g_{i+1}(z)} &\mathop  \approx \limits^{\textup{(B5)}} &\frac{m_{i}(\alpha, y,z)}{g_{i}(\alpha)g_{i+1}(y)g_{i+1}(z)}\\
&=&\frac{m_{i}(\alpha, y,z)}{g_{i}(\alpha)g_{i}(y)g_{i}(z)}\mathop  \approx \limits^{\textup{(D14)}} \frac{m(\alpha,y,z)}{g(\alpha)g(y)g(z)}.
\end{eqnarray*}
There are two other cases ($g_{i+1}(y)=g_i(y)-1$ and $g_{i+1}(z)=g_i(z)-1$) for which the proof is similar. This proves (D14)$'$.
\item [\textup{(D15)$'$}] Suppose $v_t\in \Psi_{i}^{-1}[z]$. There are three cases:\\
\textbf{Case 1:} If $g_{i+1}(x)=g_i(x)-1$ (so $x=\alpha$) , by (B5) $m_{i+1}(\alpha, y,v_{t})/g_{i+1}(\alpha) \approx m_{i}(\alpha, y,v_t)/g_i(\alpha)$. Therefore by (D15) of the induction hypothesis
\begin{eqnarray*}
\frac{m_{i+1}(\alpha, y,v_t)}{g_{i+1}(\alpha)g_{i+1}(y)} &\mathop  \approx \limits^{\textup{(B5)}} &\frac{m_{i}(\alpha, y,v_t)}{g_{i}(\alpha)g_{i+1}(y)}\\
&=&\frac{m_{i}(\alpha, y,v_t)}{g_{i}(\alpha)g_{i}(y)}\mathop  \approx \limits^{\textup{(D15)}} \frac{m(\alpha,y,z)}{g(\alpha)g(y)g(z)}.
\end{eqnarray*}  
\textbf{Case 2:} If $g_{i+1}(y)=g_i(y)-1$ (so $y=\alpha$), the proof is similar to that of case 1.\\
\textbf{Case 3:} If $g_{i+1}(z)=g_i(z)-1$ (so $z=\alpha$), since in forming $\scr F_{i+1}$ no hyperedge is detached from $x,y$ and $v_t$, we have $m_{i+1}(x, y,v_t) \approx m_{i}(x, y,v_t)$. Therefore by (D15) of the induction hypothesis
$$\frac{m_{i+1}(x, y,v_t)}{g_{i+1}(x)g_{i+1}(y)} = \frac{m_{i}(x, y,v_t)}{g_i(x)g_i(y)} \mathop  \approx \limits^{\textup{(D15)}}  \frac{m(x,y, \alpha)}{g(x)g(y)g(\alpha)}.$$
By (B6) $m_{i+1}(x, y,v_{i+1}) \approx m_{i}(x, y, \alpha)/g_i(\alpha)$. Therefore by (D14) of the induction hypothesis
\begin{eqnarray*}
\frac{m_{i+1}(x, y,v_{i+1})}{g_{i+1}(x)g_{i+1}(y)} &\mathop  \approx \limits^{\textup{(B6)}} &\frac{m_{i}(x, y, \alpha)}{g_{i+1}(x)g_{i+1}(y)g_i(\alpha)}\\
&=&\frac{m_{i}(x, y, \alpha)}{g_{i}(x)g_{i}(y)g_i(\alpha)}\mathop  \approx \limits^{\textup{(D14)}} \frac{m(x,y, \alpha)}{g(x)g(y)g(\alpha)}.
\end{eqnarray*}  
This proves (D15)$'$.
\item [\textup{(D16)$'$}] Suppose $v_s\in \Psi_{i}^{-1}[y], v_t\in \Psi_{i}^{-1}[z]$. There are three cases:\\
\textbf{Case 1:} If $g_{i+1}(x)=g_i(x)-1$ (so $x=\alpha$) , by (B5) and (D16) of the induction hypothesis
$$\frac{m_{i+1}(\alpha, v_{s},v_t)}{g_{i+1}(\alpha)}\mathop  \approx \limits^{\textup{(B5)}} \frac{m_{i}(\alpha, v_s,v_t)}{g_i(\alpha)}\mathop  \approx \limits^{\textup{(D16)}} \frac{m(\alpha,y,z)}{g(\alpha)g(y)g(z)}.$$
\textbf{Case 2:} If $g_{i+1}(y)=g_i(y)-1$ (so $y=\alpha$), since in forming $\scr F_{i+1}$ no hyperedge is detached from $x, v_s$ and $v_t$, we have  $m_{i+1}(x, v_s,v_t)=m_{i}(x, v_s,v_t)$. Therefore by (D16) of the induction hypothesis
$$\frac{m_{i+1}(x, v_s,v_t)}{g_{i+1}(x)}=\frac{m_{i}(x, v_s,v_t)}{g_{i}(x)}\mathop  \approx \limits^{\textup{(D16)}} \frac{m(x, \alpha,z)}{g(x)g(\alpha)g(z)}.$$
By (B6) $m_{i+1}(x, v_{i+1},v_t)\approx m_{i}(x, \alpha,v_t)/g_i(\alpha)$. Therefore by (D15) of the induction hypothesis
\begin{eqnarray*}
\frac{m_{i+1}(x, v_{i+1},v_t)}{g_{i+1}(x)}& \mathop  \approx \limits^{\textup{(B6)}} &\frac{m_{i}(x, \alpha,v_t)}{g_{i+1}(x)g_i(\alpha)}\\
&=&\frac{m_{i}(x, \alpha,v_t)}{g_{i}(x)g_i(\alpha)}\mathop  \approx \limits^{\textup{(D15)}} \frac{m(x, \alpha,z)}{g(x)g(\alpha)g(z)}.
\end{eqnarray*}  
\textbf{Case 3:} If $g_{i+1}(z)=g_i(z)-1$ (so $z=\alpha$), the proof is similar to that of case 2. This proves (D16)$'$.
\item [\textup{(D17)$'$}] Suppose $v_r\in \Psi_{i}^{-1}[x], v_s\in \Psi_{i}^{-1}[y], v_t\in \Psi_{i}^{-1}[z]$. Since in forming $\scr F_{i+1}$ no hyperedge is detached from $v_r,v_s$ and $v_t$, we have $m_{i+1}(v_r, v_s,v_t)=m_{i}(v_r, v_s,v_t)$. Therefore by (D17) of the induction hypothesis
$$m_{i+1}(v_r, v_s,v_t) \mathop  \approx \limits^{\textup{(D17)}} \frac{m(x,y,z)}{g(x)g(y)g(z)}.$$
If $g_{i+1}(x)=g_i(x)-1$ (so $x=\alpha$) , by (B6) and (D16) of the induction hypothesis
$$m_{i+1}(v_{i+1}, v_s,v_t)  \mathop  \approx \limits^{\textup{(B6)}} \frac{m_{i}(\alpha, v_s,v_t)}{g_i(\alpha)}  \mathop  \approx \limits^{\textup{(D16)}} \frac{m(\alpha,y,z)}{g(\alpha)g(y)g(z)}.$$
There are two other cases ($g_{i+1}(y)=g_i(y)-1$ and $g_{i+1}(z)=g_i(z)-1$) for which the proof is similar. This proves (D17)$'$.
\end{itemize}
\end{proof}
A similar statement can be proved for every color class simply by restricting each relation above to a color class $j\in \{1,\dots, k\}$.

\subsection{Relations between $\scr G=\scr F_{n}$ and $\scr F$}\label{FnF}
Recall that $\scr G=\scr F_n, \Psi=\Psi_n$ and $g_n(x)=1$ for each $x\in V$. 
We claim that $\scr G$ satisfies all conditions stated in Theorem \ref{hyp1}. 

Obviously $\scr G$ is a $g$-detachment of $\scr F$. Let $x, y, z$ be an arbitrary triple of distinct vertices of $V$, and let $j\in \{1,\ldots,k\}$.  
Now in (D1)--(D17) we let $i=n$. From (D3) and (D4) it is immediate that $\scr G$ is loopless. From (D5), (D8) and (D9) it follows that $\scr G$ has no hyperedge of size 2. Thus $\scr G$ is a 3-uniform hypergraph. 

From (D1) it follows that $d_{\scr F_n}(x)/g_n(x) \approx d(x)/g (x)$, so  $d_{\scr G}(x) \approx d(x)/g(x)$. From (D2), $d_{\scr F_n}  (v_r) \approx d(x)/g (x) $ for each $v_r\in\Psi _n^{-1}[x]$, so $d_{\scr G}  (v_r) \approx d(x)/g(x) $ for each $v_r\in\Psi ^{-1}[x]$. Therefore $\scr G$ satisfies (A1). 

A similar argument shows that (A2) follows from the colored version of (D1) and (D2), (A3) follows from (D6), (D7), and (D10)--(D17), and (A4) follows from the colored version of (D6), (D7), and (D10)--(D17). This completes the proof of Theorem \ref{hyp1}.

\section{Algorithmic Aspects } \label{algocons}
To construct an $r$-factorization for $\lambda K_n^3$, we start with an amalgamation of $\lambda K_n^3$ in which all hyperedges are loops. We color the hyperedges among $k:=\lambda \binom{n-1}{2}/r$ color classes as evenly as possible, and apply Theorem \ref{hyp1}. In Theorem \ref{hyp1}, we detach vertices in $n-1$ steps. At each step, to decide how to share edges (and hinges) among the new vertices, we define two sets $\mathscr A$ and $\mathscr B$ whose sizes are no more than $1+k+\binom{n}{3}$ and $(k+1)\binom{n}{2}$, respectively, and use Nash-Willimas lemma. Nash-Williams lemma builds a graph of size $O(n^3)$ (or more precisely of size $|\mathscr A|+|\mathscr B|$) and finds a set $Z$ with a polynomial time algorithm.  The set $Z$ tells us exactly how to share edges (and hinges) among the new vertices. 
Therefore, our construction is polynomial in $\binom{n}{3}$, the output size for the problem.

\section{Acknowledgement}
The author wishes to thank his PhD supervisor professor Chris Rodger, his colleague Frank Sturm, and the referees for their valuable suggestions.

\bibliographystyle{model1-num-names}
\bibliography{<your-bib-database>}

\begin{thebibliography}{99}           
\footnotesize{            

\bibitem {AndHil1} L.D. Andersen, A.J.W. Hilton, Generalized Latin rectangles I: Construction and decomposition, Discrete Math. 31(2) (1980) 125--152.
\bibitem {AndHil2} L.D. Andersen,  A.J.W. Hilton, Generalized Latin rectangles II: Embedding, Discrete Math. 31(3) (1980), 235--260.
\bibitem{AndRod1} L.D. Andersen, C.A. Rodger, Decompositions of complete graphs: Embedding partial edge-colourings and the method of amalgamations, Surveys in Combinatorics, Lond Math Soc Lect Note Ser 307 (2003), 7--41.
\bibitem{BahThesis} M.A. Bahmanian, Amalgamations and Detachments of Graphs and Hypergraphs, PhD Dissertation, Auburn University, 2012. 
\bibitem{BahRod4Emb2}  M.A. Bahmanian, C.A. Rodger,  Extending partial edge-colorings of complete 3-uniform hypergraphs to $r$-factorizations, submitted for publication. 
\bibitem {Baran75} Zs. Baranyai, On the factorization of the complete uniform hypergraph, Infinite and finite sets (Colloq., Keszthely, 1973; dedicated to P. Erd\H os on his 60th birthday), Vol. I, pp. 91--108. Colloq. Math. Soc. Janos Bolyai, Vol. 10, North-Holland, Amsterdam, 1975.
\bibitem {Baran79} Zs. Baranyai, The edge-coloring of complete hypergraphs I, J. Combin. Theory  B 26(3) (1979), 276--294. 
\bibitem {HypBerge} C. Berge, Hypergraphs, North-Holland, Amsterdam, 1989.
\bibitem {BeJacksonJor} A.R. Berg, B. Jackson, T. Jord\'{a}n,  Highly edge-connected detachments of graphs and digraphs, J. Graph Theory 43 (2003) 67--77. 
\bibitem {BrouSchrij} A.E. Brouwer, A. Schrijver, Uniform hypergraphs, in: A. Schrijver (ed.), Packing and Covering in Combinatorics, Mathematical Centre Tracts 106, Amsterdam (1979), 39--73.
\bibitem {B} H. Buchanan II, Graph factors and Hamiltonian decompositions, Ph.D. Dissertation, West Virginia University 1997.
\bibitem {deW71}D. de Werra, Balanced schedules, INFOR---Canad. J. Operational Res. and Information Processing 9 (1971), 230--237. 
\bibitem {deW71-2} D. de Werra, Equitable colorations of graphs, Rev. Fran. Inf. Rech. Oper. 5 (1971), 3--8.
\bibitem {deW75}D. de Werra, A few remarks on chromatic scheduling, Combinatorial programming: methods and applications (Proc. NATO Advanced Study Inst., Versailles, 1974), pp. 337--342. NATO Advanced Study Inst. Ser., Ser. C: Math. and Phys. Sci., Vol. 19, Reidel, Dordrecht, 1975. 
\bibitem {deW75-2} D. de Werra, On a particular conference scheduling problem. INFOR---Canad. J. Operational Res. and Information Processing 13 (1975), no. 3, 308--315.  
\bibitem {H2} A.J.W. Hilton, Hamilton decompositions of complete graphs, J. Combin. Theory B 36 (1984), 125--134.
\bibitem {HJRW} A.J.W. Hilton, M. Johnson, C.A. Rodger, E.B. Wantland, Amalgamations of connected $k$-factorizations, J. Combin. Theory B 88 (2003)  267--279.
\bibitem {HR} A.J.W. Hilton, C.A. Rodger, Hamilton decompositions of complete regular $s$-partite graphs, Discrete Math. 58 (1986) 63--78.
\bibitem {JacksonJor} B. Jackson, T. Jord\'{a}n, Non-separable detachments of graphs, Dedicated to Crispin St. J. A. Nash-Williams, J. Combin. Theory Ser. B 87 (2003) 17--37. 
\bibitem {MatJohns} M. Johnson, Amalgamations of factorizations of complete graphs, J. Combin. Theory B 97 (2007), 597--611.
%{\color{ForestGreen}{
\bibitem {JungGNA} D. Jungnickel, Graphs, networks and algorithms, Third edition, Springer, Berlin, 2008. 
\bibitem{Kirk1847} T.P  Kirkman, On a problem in combinations, Camb. Dublin Math. J. 2 (1847) 191--204.
\bibitem {LA} R. Laskar, B. Auerbach, On the decompositions of $r$-partite graphs into edge-disjoint hamilton circuits,  Discrete Math. 14 (1976) 146--155.
\bibitem {LR1} C.D. Leach, C.A. Rodger, Non-disconnecting disentanglements of amalgamated 2-factorizations of complete multipartite graphs, J. Combin. Des. 9 (2001)  460--467.
\bibitem {LR2} C.D. Leach, C.A. Rodger, Hamilton decompositions of complete multipartite graphs with any 2-factor leave, J. Graph Theory 44 (2003) 208--214.
\bibitem {LR3} C.D. Leach, C.A. Rodger, Hamilton decompositions of complete graphs with a 3-factor leave, Discrete Math. 279 (2004) 337--344.
\bibitem {L} D.E. Lucas, R\'ecr\'eations Math\'ematiques, Vol. 2, Gauthiers Villars, Paris, 1892. 
\bibitem{NashW} C.St.J.A. Nash-Williams, Connected detachments of graphs and generalized Euler trails. J. London Math. Soc. 31 (1985) 17--29.
\bibitem{NashW85} C.St.J.A. Nash-Williams, Detachments of graphs and generalised Euler trails, in: I. Anderson (Ed.), Surveys in Combinatorics 1985, London Mathematical Society, Lecture Note Series, Vol. 103, Cambridge University Press, Cambridge, 1985, pp. 137--151.
\bibitem {Nash87} C.St.J.A. Nash-Williams, Amalgamations of almost regular edge-colourings of simple graphs, J. Combin. Theory B 43 (1987) 322--342.  
\bibitem {Pelt} R. Peltesohn, Das Turnierproblem f\"ur Spiele zu je dreien, Inaugural dissertation, Berlin, 1936. 
\bibitem{RayWil73} D.K. Ray-Chaudhuri, R.M. Wilson, The existence of resolvable designs, in A Survey of Combinatorial Theory, J. N. Srivastava et al. (Editors), North-Holland, Amsterdam, 1973, pp. 361--376.
\bibitem {RW} C.A. Rodger, E.B. Wantland, Embedding hyperedge-colorings into 2-edge-connected $k$-factorizations of $K_{kn+1}$, J. Graph Theory 10 (1995) 169--185.  
}
\end{thebibliography}

\end{document}